\documentclass[12pt,a4paper]{article}
\usepackage{amsmath, amsfonts, ifthen, latexsym, amssymb, amsthm, amscd}
\usepackage{amsrefs,mdframed, etoolbox}
\usepackage{multirow}

\usepackage[utf8]{inputenc}

\usepackage[margin=1in]{geometry}


\usepackage[shortlabels]{enumitem}
\usepackage{graphicx,xcolor}
\usepackage{url}
\usepackage{hyperref}
\hypersetup{colorlinks=true,citecolor=blue,filecolor=blue,linkcolor=blue,urlcolor=blue}
\usepackage[title]{appendix}

\newtheorem{theorem}{Theorem} 
\newtheorem*{theorem*}{Theorem}
\newtheorem{Theorem}[theorem]{Theorem}
\newtheorem{Lemma}[theorem]{Lemma}
\newtheorem{lemma}[theorem]{Lemma}
\newtheorem*{lemma*}{Lemma}

\newtheorem{proposition}[theorem]{Proposition}

\newtheorem{corollary}[theorem]{Corollary}

\theoremstyle{definition}
\newtheorem{definition}[theorem]{Definition}
\newtheorem{Question}[theorem]{Question}

\newtheorem{remark}[theorem]{Remark}
\newtheorem{example}[theorem]{Example}


\makeindex

\newcommand{\E}{{{\mathbb E}}}
\renewcommand{\Pr}{{\mathbb P}}
\newcommand{\Z}{\mathbb Z}

\newcommand\RR{\mathbb R}

\newcommand\NN{\mathbb N}
\newcommand\ZZ{\mathbb Z}

\newcommand{\al}{{\alpha}}

\newcommand{\be}{{\beta}}

\newcommand{\om}{{\omega}}
\newcommand{\eps}{{\varepsilon}}
\newcommand{\de}{{\delta}}
\newcommand{\De}{{\Delta}}
\newcommand{\ga}{{\gamma}}

\renewcommand{\phi}{{\varphi}}

\provideboolean{theno_parts}
\setboolean{theno_parts}{true}
\renewcommand{\part}[1][(a)]{%
    \ifthenelse{\boolean{theno_parts}}{%
        \begin{enumerate}[#1]%
            \item
    }{%
        \item
    }
    \setboolean{theno_parts}{false}%
}
\newcommand{\trap}{%
    \ifthenelse{\boolean{theno_parts}}{}{%
        \end{enumerate}
    }
    \setboolean{theno_parts}{true}
}

\newcommand{\newop}[2]{%
    \expandafter\def\csname #1\endcsname{\operatorname{#2}}
}

\newop{In}{In}
\newop{Out}{Out}
\newop{indeg}{indeg}
\newop{outdeg}{outdeg}
\newop{ordiam}{depth}
\newop{depth}{depth}
\newop{im}{im}

\newop{output}{\tt output}
\newop{shift}{\tt shift}
\newop{gate}{\tt gate}

\newop{Id}{\tt Id}

\renewcommand{\cal}[1]{{\mathcal{#1}}}

\DeclareFontFamily{U}{mathx}{\hyphenchar\font45}
\DeclareFontShape{U}{mathx}{m}{n}{<-> mathx10}{}
\DeclareSymbolFont{mathx}{U}{mathx}{m}{n}
\DeclareMathAccent{\widebar}{0}{mathx}{"73}
\renewcommand{\bar}{\widebar}

\setlength{\parskip}{0.7em}
\setlength{\parindent}{1em} 

\newop{maxindeg}{maxindeg}
\newop{IN}{In}
\newop{OUT}{Out}
\newop{Ind}{Ind}
\newop{Pre}{Pre}
\newop{Post}{Post}
\newop{Pow}{Pow}

\newop{Impact}{Res}
\newop{Res}{Res}

\newop{Hull}{Hull}
\newop{codiam}{codiam}
\newcommand{\suchthat}{\text{ such that }}

\definecolor{red}{rgb}{1,0,0}
\definecolor{blue}{rgb}{0,0,1}


\title{On directed analogues of expander and hyperfinite graph sequences}
\author{Endre Cs{\'o}ka\thanks{E.Cs. was supported by the ERC Synergy grant No. 810115}, {\L}ukasz Grabowski\thanks{\L.G. was supported by the ERC under the European Union's Horizon 2020 research and innovation programme (Grant agreement No. 805495)}}
\date{}

    \setlength{\itemsep}{0pt}
\begin{document}
\maketitle
\begin{abstract}

We introduce and study analogues of expander and hyperfinite graph 
sequences in the context of directed acyclic graphs, which we call 
``extender'' and ``hypershallow'' graph sequences, respectively. Our main result is a 
probabilistic construction of non-hypershallow graph sequences.

\end{abstract}

\noindent

\tableofcontents

\section{Introduction}
Hyperfinite and expander graph sequences are perhaps the two most fundamental concepts studied in the theory of sparse graph limits. Hyperfinite graph sequences were explicitly introduced in~\cite{MR2455943} (and implicitly they are present in earlier works, e.g.~in~\cite{MR584516}). Expander graph sequences (frequently informally referred to as ``expander graphs'') have been studied since at least the 70's in many different branches of mathematics and computer science (see~\cite{MR2247919} for a survey with some historical information).  Both notions (or their close relatives) are broadly used in combinatorics, group theory, ergodic theory, and operator algebras. 

In this article we study the analogues of hyperfinite and expander graph sequences in the context of oriented graphs, particularly directed acyclic graphs. We call these  analogues ``hypershallow'' and ''extender'' graph sequences, respectively.
 Our main result (see Theorem~\ref{thm-main-intro} below) is a stochastic construction of graph sequences which are not hypershallow (we do not know any deterministic construction of such graph sequences). The question whether non-hypershallow graph sequences exist was partially motivated by the techniques presented in~\cite{afshani_et_al:LIPIcs:2019:10586} and in~\cite{MR584516} for obtaining conditional lower bounds in circuit complexity. We will discuss this in Section~\ref{sec-outro}.

Let us now precisely define hypershallow graph sequences and state our main result. 

\paragraph{Basic conventions} The set of natural numbers is $\NN := 
\{0,1,2,\ldots\}$. We use the shorthand $(X_n)$ for denoting a 
sequence $(X_n)_{n=0}^\infty$. 

A graph is a pair $G=(V,E)$ where $V$ is a non-empty finite set, and $E\subset V\times V$ is a subset which is disjoint from the diagonal. We say that $G$ is \emph{undirected} if $E$ is a symmetric subset of $V\times V$. A \emph{path} of \emph{length $D$} in a graph $G=(V,E)$ is a tuple $(x_0,\ldots, x_D)\in V^{D+1}$, such that for $i<D$ we have either $(x_i,x_{i+1})\in E$ or $(x_{i+1}, x_i)\in E$.

A path $(x_0,\ldots, x_D)$ is \emph{simple} if $x_i\neq x_j$ for $i \neq j$. It is a \emph{directed} path if for all $i<D$ we have  $(x_i,x_{i+1}) \in E(G)$.
A \emph{cycle} is a path $(x_0,\ldots, x_D)$ such that $x_0=x_{D}$. We say that $G$ is a \emph{dag} (which stands for \emph{directed acyclic graph}) if it does not have directed cycles. 


We are now ready to define \emph{hypershallow} graph sequences.

\newop{codepth}{codepth}
\begin{definition} 
\begin{enumerate}
\item Let $G$ be a  graph and let $S\subsetneq V(G)$ be a proper subset. We define $\codepth(S;G)\in \NN$ as the maximal $D$ such that there exists a directed simple path $(x_0,\ldots,x_D)$ in $G$ disjoint from $S$. 

\item Let $(G_n)$ be a sequence of dags with uniformly bounded in-degrees. We say 
that $(G_n)$ is \emph{hypershallow} if $\forall \eps>0,\ \exists D\in 
\NN,\ \exists (S_n)$ with $S_n\subsetneq V(G_n)$ and 
$|S_n|<\eps|V(G_n)|$, such that  $\forall n\in \NN$ we have 
$\codepth(S_n;G_n)\le D$.

\end{enumerate}
\end{definition}

\begin{remark}\label{rem_intro1}
Let us take a moment to explicitly state the analogy between the definitions of hypershallow and hyperfinite graph 
sequences.
\begin{enumerate}
\item  We first recall the definition of hyperfinite graph sequences. If $G$ is an undirected graph and $S\subsetneq V(G)$, then  we note that $\codepth(S;G)$ is the maximum of lengths of simple paths disjoint from $S$.

We define a sequence $(G_n)$ of undirected graphs with uniformly bounded degrees to be \emph{hyperfinite} if $\forall \eps>0,\ \exists D\in \NN,\ \exists (S_n)$ with $S_n\subsetneq V(G_n)$ and $|S_n|<\eps|V(G_n)|$, such that $\forall n\in \NN$ we have $\codepth(S_n;G_n)\le D$.

 This is easily seen to be equivalent to the definition of hyperfiniteness in~\cite{MR2455943}.

From this point of view, and with our convention that undirected graphs form a subclass of all graphs, within the class of bounded degree undirected graphs the hypershallow sequences are exactly the same as hyperfinite sequences.

\item Let us explain the choice of the word ``hypershallow'', again by analogy with the word  ``hyperfinite''. One of the simplest classes of undirected graph sequences consists of those sequences $(G_n)$ which have uniformly finite connected components, i.e.~$\exists D$ such that $\forall n$ we have that the connected components of $G_n$ are of size at most $D$. We recall that the expression ``hyperfinite graph sequence'' is meant to suggest that we are dealing with ``the next simplest thing'': informally, a sequence $(G_n)$ is hyperfinite if it is possible to obtain from $(G_n)$ a sequence with uniformly finite connected components by removing an arbitrarily small proportion of vertices from $(G_n)$.

The motivation to use the word ``hypershallow'' is similar. For a dag $G$, let $\depth(G)$ denote the maximum of lengths of directed paths in  $G$. One of the simplest classes of dag sequences with uniformly bounded in-degrees consists of the ``uniformly shallow'' sequences, i.e.~$\exists D$ such that $\forall n$ we have $\depth(G_n)\le D$. The name ``hypershallow graph sequence'' is meant to suggest that we are dealing with ``the next simplest thing'': after removing a small proportion of vertices we get a sequence which is uniformly shallow.\footnote{Uniformly shallow sequences are much more frequently called ``bounded depth sequences''. However, the authors think that ``hypershallow'' {sounds} much better than ``hyper-bounded-depth''.}

\end{enumerate}
\end{remark}

The following definition allows us, informally speaking, to capture ``how badly'' a sequence of graphs fails at being hypershallow.

\begin{definition} 
\begin{enumerate}
\item
Let $G$ be a dag, let $\eps, \rho>0$. We say that $G$ is an $(\eps,\rho)$-extender if for every $S\subsetneq V(G)$ with $|S|\le\eps |V(G)|$ we have $\codepth(S;G) \ge \rho$.
\item
Let $(G_n)$ be a sequence of dags with uniformly bounded in-degrees, and let $(\rho_n)$ be a sequence of positive real numbers with $\lim_{n\to\infty} \rho_n = \infty$. We say that $(G_n)$ is a \emph{$(\rho_n)$-extender} sequence if $\lim_{n\to\infty} |V(G_n)| = \infty$ and $\exists \eps>0$, $\exists C>0$, $\forall n\in \NN:$ $G_n$ is an $(\eps, C\rho_{|V(G_n)|})$-extender.
\end{enumerate}
\end{definition}

\begin{remark}
It is easy to check that  a sequence $(G_n)$ of dags with uniformly bounded in-degrees is not hypershallow if and only if it contains a subsequence which is a $(\rho_n)$-extender for some $(\rho_n)$ with $\lim_{i\to\infty} \rho_n = \infty$. 
\end{remark}

We are now ready to state our main theorem.

\begin{Theorem}\label{thm-main-intro}
There exists a sequence of  directed acyclic graphs with uniformly bounded degrees which is an $(n^\de)$-extender sequence, with $\de\approx 0.019$.
\end{Theorem}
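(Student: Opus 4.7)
The construction is probabilistic. For each $n$, I would show that a random DAG $G_n$ on $n$ vertices with uniformly bounded in- and out-degrees is an $(\eps, Cn^{\de})$-extender with positive probability, for suitable constants $\eps, C > 0$ and $\de \approx 0.019$; selecting one realization per $n$ then gives the desired sequence. A natural model is a random DAG on $[n]$ in which each vertex $v$ independently selects $d$ in-neighbors uniformly from some prescribed set of earlier vertices, e.g., an interval $[v-R, v-1]$ of length $R$. The model must be tuned so that both in- and out-degree are uniformly bounded: the naive ``$d$ uniform in-neighbors from all of $[v-1]$'' model has average out-degree $\Theta(d\log n)$ on the earliest vertices, and a pure layered model fails because an adversary can delete an entire layer using only $n/L \ll \eps n$ vertices. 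The parameters $d$ and $R$ (possibly together with additional structure such as multi-scale or ``shift'' edges) will be tuned along with $\de$.

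\textbf{Witness reformulation and union bound.} Set $L = \lfloor Cn^{\de}\rfloor$. The event $\codepth(S; G_n) < L$ is equivalent to the existence of a level function $\phi\colon V(G_n)\setminus S \to \{0, 1, \ldots, L-1\}$ with $\phi(u) < \phi(v)$ whenever $(u,v) \in E(G_n)$ and $u, v \notin S$. Calling such a pair $(S, \phi)$ with $|S|\le\eps n$ a \emph{witness}, the union bound gives
\[
\Pr[G_n\text{ is not an }(\eps, L)\text{-extender}] \;\le\; \sum_{(S, \phi)\text{ witness}} \Pr[G_n \text{ is compatible with }(S,\phi)],
\]
where the total number of witnesses is at most $\binom{n}{\eps n}\, L^{(1-\eps)n}$, whose base-$2$ logarithm is $O(n) + (1-\eps)\, n\, \de\, \log_2 n$. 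For a fixed $(S,\phi)$, compatibility factorizes over vertices: each $v \in V\setminus S$ needs its $d$ random in-neighbors to fall into the allowed set $A_v := \bigl(S \cup \phi^{-1}(\{0, 1, \ldots, \phi(v)-1\})\bigr) \cap [v-R, v-1]$, yielding $\Pr[\text{compatible}] = \prod_{v\in V\setminus S}(|A_v|/R)^d$ up to negligible boundary corrections.

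\textbf{Quantitative optimization.} The heart of the argument is a uniform lower bound
\[
\sum_{v\in V\setminus S}\log_2\!\bigl(R/|A_v|\bigr) \;\ge\; \alpha\, n\log_2 n
\]
for some explicit $\alpha>\de$, valid over all admissible $(S,\phi)$. A convexity / Lagrangian reduction should show that the worst-case adversary concentrates $S$ in a few bands and chooses $\phi$ so that the levels are roughly balanced, reducing the bound to a small finite-dimensional optimization in the parameters $\eps, d, R/n$, and $\de$. Requiring the resulting exponent $d\cdot\alpha\, n\log_2 n$ to dominate the witness-entropy $(1-\eps)\, n\, \de\log_2 n+O(n)$ produces a numerical inequality for $\de$; optimizing over $\eps, d, R$ gives the best attainable $\de$, which with the chosen construction comes out to $\de\approx 0.019$.

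\textbf{Main obstacle.} The principal difficulty is obtaining a sharp uniform lower bound on $\sum_v \log_2(R/|A_v|)$. The adversary's choices of $S$ and $\phi$ are coupled: by placing $S$ cleverly and aligning $\phi$ with it, the adversary can make $|A_v|/R$ close to $1$ on many $v$, contributing almost nothing to the sum. Showing that only a controllable fraction of vertices can simultaneously be in this ``bad regime'', and quantifying this trade-off in terms of $\eps, d$, and the model, is exactly what pins down the numerical value of $\de$. A secondary but nontrivial task is engineering the random model so that the final graph also has uniformly bounded out-degree (not just in-degree), which usually requires a random-regular bipartite construction between scales or an explicit rejection/symmetrization step in the sampling.
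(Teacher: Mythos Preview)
Your skeleton (probabilistic construction plus a union bound over level/depth functions) matches the paper, but two decisive ideas are missing, and without them the scheme as written will not close.

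\textbf{1. The single-scale model is defeated by the obvious adversary.} If each vertex draws its $d$ in-neighbours from a fixed window $[v-R,v-1]$, then the monotone labeling $\phi(v)=\lfloor vL/n\rfloor$ (with $S=\emptyset$) makes $|A_v|=R$ for every $v$ not within distance $R$ of a level boundary; only an $O(RL/n)$-fraction of vertices contribute anything to $\sum_v\log(R/|A_v|)$, and that contribution is $O(n)$, not the $\Theta(n\log n)$ you need to beat the $L^n=n^{\delta n}$ witness count. No convexity/Lagrangian argument can repair this, because the extremiser already is essentially this labeling. The paper's model is genuinely multi-scale: an edge is sampled by first choosing $r$ uniformly in $\{0,\dots,\lfloor\log n\rfloor-1\}$ and then a random pair at distance $\Theta(2^r)$, so edge lengths span all $\log n$ dyadic scales.

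\textbf{2. The per-edge bound comes from an entropy/Pinsker telescoping, not from combinatorics on $(S,\phi)$.} The key lemma is: for \emph{any} labeling $l\colon\ZZ_n\to\{0,\dots,k-1\}$ with $k\le n^{\eps^3}$, a random edge $(a,b)$ drawn from the multi-scale law satisfies $\Pr[l(a)>l(b)]<\tfrac12+4\eps$. The proof fixes $v$ and lets $X_{v,r}$ be the empirical distribution of $l$ on $[v,v+2^r-1]$; then $H(X_{v,r})$ is nondecreasing in $r$ (since $X_{v,r+1}=X_{v,r}\sqcup Y_{v,r}$) and bounded by $\log k=\eps^3\log n$, so by Markov only an $O(\eps)$-fraction of scales $r$ can have $2H(X_{v,r+1})-H(X_{v,r})-H(Y_{v,r})\ge\eps^2$, and on the remaining scales Pinsker's inequality forces the two half-distributions to be close in total variation, hence $\Pr[l(a)>l(b)\mid v,r]\le\tfrac12+2\eps$. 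This is exactly the ``only a controllable fraction can be in the bad regime'' statement you are looking for, but the controlling quantity is entropy across scales, and the mechanism that produces $\de=\eps^3$ is the comparison $\log k$ versus $\log n$.

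\textbf{3. The witness count is handled differently.} The paper does not union-bound over all $L^n$ labelings. It observes that any depth function is determined by a set $D\subset E$ of at most $n$ ``successor'' edges (one per non-sink vertex), so it suffices to union-bound over the $\binom{dn}{\le n}\le 2^{H(1/d)\,dn}$ choices of which of the $dn$ sampled edges lie in $D$. Conditioning on those, each of the remaining $(d-1)n$ independent edges is compatible with the induced depth function with probability at most $\tfrac12+4\eps$ by the lemma above, giving
\[
2^{H(1/d)\,dn}\bigl(\tfrac12+4\eps\bigr)^{(d-1)n}<1
\]
for $d$ large. Your $L^{(1-\eps)n}$ count would require a per-witness probability of $n^{-\Theta(n)}$, which the model does not deliver.

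Finally, the bounded-degree and acyclicity issues are handled a posteriori by deleting the $o(n)$ vertices of high degree and the endpoints of the $o(n)$ ``wrap-around'' edges, which is easy once the extender property is established with a little slack in $\eps$.
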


Our proof of this theorem is probabilistic. The most important part of the proof consists of studying the random graphs $\mathbf G^d_n$ which will be introduced in Section~\ref{sec:example}.  We do not know of a non-probabilistic way of constructing a  non-hypershallow sequence of dags with uniformly bounded degrees.

On the other hand, we can ask how fast the sequence $(\rho_n)$ can grow, provided that there exists a $(\rho_n)$-extender sequence. In this direction we have the following result.
\begin{Theorem}\label{thm-sublinear-intro}
Let $(\de_n)$ be a sequence of numbers in $[0,1]$ such that $\lim_{n\to\infty} \de_n = 1$. If $(G_n)$ is a sequence of directed acyclic graphs with uniformly bounded in-degrees, then $(G_n)$ is not an $(n^{\de_n})$-extender sequence.
\end{Theorem}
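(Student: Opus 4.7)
The proof reduces to the following quantitative lemma: for every $\epsilon \in (0,1)$, every $\delta \in (0,1)$, and every $d \ge 1$, there exists $N_0$ such that every dag $G$ with $N := |V(G)| \ge N_0$ and maximum in-degree at most $d$ admits a proper subset $S \subsetneq V(G)$ with $|S| \le \epsilon N$ and $\codepth(S; G) \le N^\delta$.

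Granting this lemma, the theorem follows. Assume $|V(G_n)| \to \infty$, for otherwise $(G_n)$ is trivially not a $(\rho_n)$-extender sequence for any $\rho_n$. Fix arbitrary $\epsilon, C > 0$ and pick any $\delta \in (0,1)$; since $\delta_n \to 1$, for all $n$ large enough we have $\delta_{|V(G_n)|} > \delta$. For such $n$ the lemma yields $S_n$ with $|S_n| \le \epsilon |V(G_n)|$ and $\codepth(S_n; G_n) \le |V(G_n)|^\delta$, and since
\[
\frac{\codepth(S_n; G_n)}{C\,|V(G_n)|^{\delta_{|V(G_n)|}}} \le \frac{|V(G_n)|^{\delta - \delta_{|V(G_n)|}}}{C} \to 0 \quad (n \to \infty),
\]
eventually $\codepth(S_n; G_n) < C\,|V(G_n)|^{\delta_{|V(G_n)|}}$, witnessing that $G_n$ is not an $(\epsilon, C\,|V(G_n)|^{\delta_{|V(G_n)|}})$-extender.

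For the lemma itself I would work with the depth layering: for $v \in V(G)$, let $\depth(v)$ be the length of the longest directed path in $G$ ending at $v$, and set $L_t := \{v : \depth(v) = t\}$. Every edge $(u, v)$ satisfies $\depth(v) \ge \depth(u) + 1$, so any directed simple path visits a strictly increasing sequence of depths and hence has length at most one less than the number of distinct depths it uses. Writing $M := \depth(G)$: if $M \le N^\delta$ one takes $S = \emptyset$; otherwise, averaging over the $M+1$ layers (which partition $V(G)$ and sum to $N$) shows that the $k$ smallest layers have total size at most $kN/(M+1)$, and removing them eliminates $k$ depths from being usable by any directed path. Combining this with a Valiant-style recursive decomposition of the depth range to handle the long ``skip'' edges $(u,v)$ with $\depth(v) - \depth(u) \gg 1$ (whose total number is controlled because each $v$ has at most $d$ incoming edges) should produce $|S| \le \epsilon N$ and $\codepth(S; G) \le N^\delta$.

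\textbf{Main obstacle.} The straightforward single application of smallest-layer removal with $k = \lfloor \epsilon(M+1) \rfloor$ only gives $\codepth(S;G) \le (1-\epsilon) M + 1$, which for $M \asymp N$ is merely $(1-\epsilon)N$ --- linear in $N$, rather than polynomially below $N$. This suffices to rule out extender sequences with $C > 1 - \epsilon$, but the theorem demands ruling them out for \emph{every} $C > 0$, which requires a polynomial saving. Iterating the layer-removal only reduces the multiplicative constant from $1-\epsilon$ to $e^{-\Theta(\epsilon)}$ (another constant factor), not a polynomial. The key technical step is therefore to obtain this polynomial improvement --- via a Valiant-style binary depth decomposition or an analogous inductive argument --- essentially using the uniform in-degree bound, which limits the number of long skip edges that could otherwise route a directed path around any single removed layer.
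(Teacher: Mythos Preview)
Your reduction to a single-graph lemma and the deduction of the theorem from it are correct (note that your deduction only uses the lemma for \emph{one} value of $\delta<1$, not for every $\delta$; the ``for every $\delta$'' version you state is stronger than needed and not obviously obtainable from a straight Valiant application). The gap is that you never actually prove the lemma: you correctly diagnose that naive layer removal only buys a constant factor and that a Valiant-style argument is required, but you stop at ``should produce''. The two ingredients you list --- smallest-layer removal plus a recursive depth decomposition --- do not compose cleanly, since the layers singled out by Valiant's bit-position pigeonhole have no reason to be the small ones, and the uneven layer sizes are exactly what makes the depth-layer viewpoint awkward here. A clean execution of Valiant's lemma \emph{does} work (remove the $O(dN\cdot \log(D/N^\delta)/\log D)$ edges Valiant gives, take $S$ to be their tails; any directed path avoiding $S$ avoids those edges and so has length $\le N^\delta$), but you have to carry it out and check that the edge count is $\le \eps N$ for an appropriate $\delta=\delta(\eps,d)<1$.

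The paper's argument is different in detail and sidesteps your obstacle. Instead of the depth function it fixes a \emph{linear} topological order $V(G)=\{0,\dots,N-1\}$, so every vertex gets a distinct label. It then pigeonholes once over the $\lceil 1/\eps\rceil$ edge-length scales $[N^{j\eps},N^{(j+1)\eps})$ to find a scale $[N^c,N^{c+\eps})$ containing at most $\eps d N$ edges, and puts into $S$ the tails of these ``medium'' edges together with a periodic barrier $\{x: x \bmod A < B\}$ with $A\approx N^c/\eps$, $B\approx N^c$. The linear labelling makes this barrier an exact $\eps$-fraction of $V(G)$, which is precisely what fails for depth layers. Any directed path avoiding $S$ uses only short ($<N^c$) and long ($\ge N^{c+\eps}$) edges; there are at most $N^{1-c-\eps}$ long edges, and each maximal run of short edges is trapped between consecutive barriers, hence has length at most $N^c/\eps$. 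Multiplying gives $\codepth(S;G)\le \tfrac{2}{\eps}N^{1-\eps}$ in one shot --- no recursion. Both routes yield the same quantitative conclusion; the paper's is self-contained, while yours needs either a precise Valiant invocation or the switch from depth layers to a linear topological sort.
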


\begin{remark}\label{rem-noext}
Theorem~\ref{thm-sublinear-intro} implies, for example, that there are no $(\frac{n}{\log(n)})$-extender sequences. However, we do not know whether there exists an $(n^\de)$-extender sequence for every $\de < 1$. 
\end{remark}

In Section~\ref{sec-prelim} we list some standard definitions and conventions, and we discuss a variant of Pinsker's inequality which involves the Shannon entropy (Proposition~\ref{prop-pinsker}). Pinsker's inequality is the most important external result in our analysis of the random graphs $\mathbf G_n^d$. 

The main part of this article is Section~\ref{sec:example} where we introduce the random graphs $\mathbf G_n^d$ and use them to prove Theorem~\ref{thm-main-intro}.

We conclude this article with Section~\ref{sec-outro}, where we present the proof of Theorem~\ref{thm-sublinear-intro}, and we discuss our initial motivations for studying hypershallow and extender graph sequences which are related to the theory of boolean circuits.

\paragraph{Acknowledgements} We thank one of the anonymous referees for correcting several typos and errors, and for very useful  suggestions for improving  the readability of this article.

We also thank the organisers of the workshop \emph{Measurability, Ergodic Theory and Combinatorics} which took place at Warwick University in July 2019. A very substantial progress on this project happened during that workshop, and we are grateful for inviting both of us and for providing an excellent environment for mathematical collaboration.

Finally, we thank the authors of the blog \emph{Gödel’s Lost Letter and P=NP} for blogging about the paper~\cite{afshani_et_al:LIPIcs:2019:10586} and thereby bringing it to our attention (as well as for many other very interesting posts over the years). This was the starting point for this project.

\section{Preliminaries}\label{sec-prelim}

We use the following conventions. If $n\in \NN$, then $n=\{0,1,\ldots,n-1\}$. If $X$ is a set, then $\Pow(X)$ denotes the power set of $X$, i.e.~the set of all subsets of $X$.


\newop{maxoutdeg}{maxoutdeg}
\newop{maxdeg}{maxdeg}

\subsection{Graphs}

\begin{definition}
Let $G=(V,E)$ be a  graph, and let $v,w\in V$.
\begin{enumerate}
\itemsep0pt
\item $\In(v;G):= \{x\in V\colon (x,v)\in G\}$, $\indeg(v;G):= |\In(v;G)|$, 
\item $\Out(v;G) := \{x\in V\colon (v,x)\in G\}$, $\outdeg(v;G):= |\Out(v;G)|$, 
\item $\deg(v;G) := \indeg(v;G) + \outdeg(v;G)$, 
\item $\IN(G):=\{v\in V\colon \indeg(v;G)=0\}$, $\OUT(G):=\{v\in V\colon \outdeg(v;G) = 0\}$, 
\item $\maxindeg(G):= \max_{v\in V}\indeg(v;G)$, $\maxoutdeg(G):= \max_{v\in V}\outdeg(v;G)$,\\ $\maxdeg(G) := \max_{v\in V}\deg(v;G)$,
\end{enumerate}
\end{definition}

\begin{definition} Let $(G_n)$ be a sequence of graphs. We say that $(G_n)$ has, respectively, \emph{bounded degree}, \emph{bounded in-degree}, or \emph{bounded out-degree}, if, respectively,\\ $\max_{n\in \NN}\maxdeg(G_n)< \infty$, $\max_{n\in \NN}\maxindeg(G_n)< \infty$, or $\max_{n\in \NN}\maxoutdeg(G_n)< \infty$.
\end{definition}

\subsection{Probability}
\begin{definition}
\begin{enumerate}
\item If $\mu$ is a probability measure on $\NN$, then we also use the symbol $\mu$ for the function $\NN\to \RR$ which sends $k\in\NN$ to $\mu(\{k\}$) (so in particular we can write  $\mu(k)$ instead of $\mu(\{k\}))$, and we let
$$
    H(\mu) := -\sum_{i\in\NN} \mu(i)\log(\mu(i)),
$$
where by convention $0\log(0)=0$. 

\item A \emph{random variable} on a standard probability space $(X,\mu)$ with values in a standard Borel space $Y$ is a Borel function $f\colon X \to Y$. The \emph{law} of $f$ is the push-forward measure $f^\ast(\mu)$ on $Y$, i.e.~for $U\subset Y$ we let  $f^\ast(\mu)(U):= \mu(f^{-1}(U))$. 
\item If $f$ is an $\NN$-valued random variable and $\al$ is its law, then we define $H(f):=H(\al)$. 
\item If $f$ and $g$ are random variables with values in a standard Borel space $Z$, then we define a new random variable $f\sqcup g$ with values in $Z$ by, informally speaking, choosing between $f$ and $g$ with probability $\frac{1}{2}$.

 Formally, suppose that $f$ and $g$ are defined on $(X,\mu)$ and $(Y,\nu)$, respectively. The probability space on which $f\sqcup g$ is defined is $(X\sqcup Y,\om)$, where $\om$ is the unique measure on $X\sqcup Y$ such that $\om(U) = \frac{\mu(U)}{2}$ when $U\subset X$ and $\om(U) = \frac{\nu(U)}{2}$ when $U\subset Y$. We let $f\sqcup g(x):= f(x)$ for $x\in X\subset X\sqcup Y$ and $f\sqcup g(y) := g(y)$ for $y\in Y\subset X\sqcup Y$. 

\item For $\al\colon \NN \to \RR$ we let $\|\al\|_1 := \sum_{i\in \NN} |\al(i)|$. 
\end{enumerate}
\end{definition}

\begin{lemma}\label{lem-entropy-basics}
\part If $f$ and $g$ are random variables with values in the same space $Y$, with laws $\al$ and $\be$ respectively, then the law of $f\sqcup g$ is $\frac{\al+\be}{2}$.
\part If $f$ is a random variable with values in $\{0,1,\ldots, k-1\}$ then $H(f)\le \log(k)$. 
\trap
\end{lemma}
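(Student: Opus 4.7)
The plan is to dispatch both parts directly from the definitions, with only a single appeal to Jensen's inequality.

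For part (a), I would unwind the definition of the pushforward measure. For any Borel $U\subseteq Y$, one has $(f\sqcup g)^{-1}(U) = f^{-1}(U) \sqcup g^{-1}(U)$, where the first preimage sits inside $X$ and the second inside $Y$, viewed as the two pieces of the disjoint union $X\sqcup Y$. Applying the explicit formula for $\omega$ gives
\[
    (f\sqcup g)^\ast(\omega)(U) \;=\; \omega\bigl(f^{-1}(U)\bigr) + \omega\bigl(g^{-1}(U)\bigr) \;=\; \tfrac{1}{2}\mu\bigl(f^{-1}(U)\bigr) + \tfrac{1}{2}\nu\bigl(g^{-1}(U)\bigr) \;=\; \tfrac{\alpha(U)+\beta(U)}{2},
\]
which is exactly the statement.

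For part (b), let $\alpha$ be the law of $f$, and let $S := \{i\in\{0,\ldots,k-1\} : \alpha(i) > 0\}$. By the convention $0\log 0 = 0$, we have $H(f) = \sum_{i\in S} \alpha(i)\log\frac{1}{\alpha(i)}$. Since $\log$ is concave and the $\alpha(i)$ for $i\in S$ form a probability vector, Jensen's inequality yields
\[
    \sum_{i\in S}\alpha(i)\log\tfrac{1}{\alpha(i)} \;\le\; \log\!\left(\sum_{i\in S}\alpha(i)\cdot \tfrac{1}{\alpha(i)}\right) \;=\; \log|S| \;\le\; \log(k).
\]

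Neither step is genuinely difficult; if anything is worth being careful about it is the bookkeeping in part (a) to make sure the decomposition of $(f\sqcup g)^{-1}(U)$ is consistent with the way $\omega$ was defined on the two pieces of $X\sqcup Y$, and in part (b) the reminder that the Jensen step must be carried out on the support $S$ rather than naively over all of $\{0,\ldots,k-1\}$, so that division by $\alpha(i)$ makes sense.
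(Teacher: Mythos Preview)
Your proof is correct. The paper's own proof is even terser than yours: for part (a) it simply says ``follows directly from the definitions,'' and for part (b) it cites an external reference, so your argument is the natural elaboration of what the paper only gestures at.
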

\begin{proof} \part Follows directly from the definitions.
\part See e.g.~\cite[§2.3]{galvin2014tutorial}
\trap
\end{proof}

The main point of the following proposition is contained in its second item. Informally, it allows us to say the following: if $f$ and $g$ are $\NN$-valued random variables with laws $\al$ and $\be$ respectively,  and $(x,y)\in \NN^2$ is chosen according to the law $\al\times \be$, then either it is roughly as probable that $x>y$ as it is that $y>x$, or the entropy of $f\sqcup g$ is substantially larger than the average of the entropies of $f$ and $g$.

\begin{proposition}\label{prop-pinsker}
Let $f$ and $g$ be $\NN$-valued random variables with laws $\al$ and $\be$, respectively.
\begin{enumerate}
\item We have $2H(f\sqcup g) - H(f) - H(g) \ge 0$.
\item We have 
\begin{equation}\label{eq-p}
    \al\times \be\left(\{(x,y)\in \NN^2\colon x> y\}\right)  \le \frac{1}{2} +  2\sqrt{2H(f\sqcup g)- H(f)-H(g)}.
\end{equation}
\end{enumerate}
\end{proposition}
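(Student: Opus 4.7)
The plan for part (1) is to invoke the concavity of the entropy functional: since $x \mapsto -x \log x$ is concave on $[0,1]$, the map $\mu \mapsto H(\mu)$ is concave on the space of probability measures on $\NN$. By Lemma~\ref{lem-entropy-basics}(a) the law of $f \sqcup g$ equals $\frac{\al + \be}{2}$, so $H(f \sqcup g) = H\!\left(\frac{\al+\be}{2}\right) \ge \frac{1}{2}(H(\al) + H(\be))$, which gives $2H(f \sqcup g) - H(f) - H(g) \ge 0$.

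The heart of part (2) is to recognize $2H(f \sqcup g) - H(f) - H(g)$ as (twice) a Jensen--Shannon divergence. Writing $\mu := \frac{\al+\be}{2}$ and expanding $\log \mu(i)$ after substituting $2\mu(i) = \al(i) + \be(i)$, a short direct calculation yields
$$2H(f \sqcup g) - H(f) - H(g) \;=\; D_{\mathrm{KL}}(\al \,\|\, \mu) + D_{\mathrm{KL}}(\be \,\|\, \mu),$$
where $D_{\mathrm{KL}}(\nu_1\,\|\,\nu_2) = \sum_i \nu_1(i)\log\frac{\nu_1(i)}{\nu_2(i)}$. From there I would apply Pinsker's inequality in the form $\|\nu_1 - \nu_2\|_1 \le \sqrt{2 D_{\mathrm{KL}}(\nu_1\,\|\,\nu_2)}$ to the pairs $(\al,\mu)$ and $(\be,\mu)$. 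Using the triangle inequality for $\|\cdot\|_1$ together with $\sqrt{a}+\sqrt{b}\le\sqrt{2(a+b)}$, this gives
$$\|\al - \be\|_1 \;\le\; \|\al - \mu\|_1 + \|\be - \mu\|_1 \;\le\; 2\sqrt{2H(f \sqcup g) - H(f) - H(g)}.$$

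The last step translates this $\ell^1$ bound into a bound on $p := \al \times \be(\{x > y\})$. The key trick is to bound $p - q$ rather than $p$ directly, where $q := \al \times \be(\{x < y\})$. Swapping coordinates shows $q = \be \times \al(\{x > y\})$, hence $p - q$ is the value of the signed measure $\al\times\be - \be\times\al$ on a single event. A one-line coupling argument bounds this by $\|\al-\be\|_1$. Since $p+q \le 1$ we have $p \le \frac{1}{2} + \frac{p-q}{2}$, and combining everything yields the inequality \eqref{eq-p} (indeed with a constant better than $2$).

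The only non-routine steps are the algebraic identification of $2H(f\sqcup g)-H(f)-H(g)$ with the Jensen--Shannon divergence and the decision to work with $p-q$ instead of $p$ so that the trivial bound $p+q\le 1$ can be used symmetrically; once those are in place, everything else is assembling standard inequalities (concavity of $H$, Pinsker, triangle inequality for $\|\cdot\|_1$, and the product-coupling bound on $d_{TV}$).
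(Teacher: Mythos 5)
Your proof is correct and follows essentially the same route as the paper: part (1) via concavity of entropy (the paper invokes Jensen's inequality), and part (2) via the identity $2H(\gamma) - H(\alpha) - H(\beta) = D(\alpha\|\gamma) + D(\beta\|\gamma)$ followed by Pinsker's inequality and the triangle inequality to bound $\|\alpha - \beta\|_1$. The only deviation is the very last translation step, where you symmetrize via $p - q \le \|\alpha-\beta\|_1$ and $p \le \tfrac12 + \tfrac{p-q}{2}$, while the paper bounds $\beta(j) \le \alpha(j) + |\alpha(j)-\beta(j)|$ and estimates $\sum_{i<j}\alpha(i)\beta(j)$ directly; your variant in fact saves a factor of $2$ in the final constant, but the structure of the argument is otherwise identical.
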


\begin{proof}
By the previous lemma we have that the law of $f\sqcup g$ is $\ga:= \frac{\al+\be}{2}$. As such the first item follows from Jensen's inequality.

The second item is a simple corollary of Pinsker's inequality (see e.g.~\cite[Theorem 2.16]{MR2319879} for the statement and proof of Pinsker's inequality). To derive it, we start by stating the following two special cases of Pinsker's inequality:
$$
    \|\al-\ga\|_1^2 \le 2D(\al \|\ga)
$$
and
$$
    \|\be-\ga\|_1^2 \le 2D(\be \|\ga),
$$
where 
$$
    D( \al \| \ga) := \sum_{i\in \NN} \al(i)\log\left(\frac{\al(i)}{\ga(i)}\right),
$$
and similarly for $D(\be\|\ga)$. By convention we set $0\log(0) = 0\log(\frac{0}{0}) =0$ in the definitions of $D(\al\|\ga)$ and $D(\be\|\ga)$.

Noting that $\|\al -\ga\|_1 = \|\be - \ga\|_1$, summing the two inequalities above gives
$$
    2\|\be -\ga\|^2_1 = 2\|\al-\ga\|^2_1 \le 2D(\al \|\ga) + 2D(\be \|\ga).
$$
A direct computation shows that $D(\al \|\ga) + D(\be \|\ga) = 2H(\ga) -H(\al) - H(\be)$, so together with the triangle inequality we deduce that  
\begin{equation}\label{eq-q}
\|\al -\be\|_1 \le 2\sqrt{2H(\ga) - H(\al)-H(\be)}.
\end{equation}

On the other hand, the left-hand side of~\eqref{eq-p} is equal to
\begin{align*}
\sum_{i,j\in \NN\colon i<j} \al(i)\be(j) &\le \sum_{i,j\in \NN\colon i<j} \al(i)(\al(j)+ \|\al-\be\|_1) 
\\
& \le \al\times \al \left(\{(x,y)\in \NN^2\colon x> y\}\right)+ \|\al-\be\|_1  
\\
\le \frac{1}{2} + \|\al-\be\|_1
\end{align*}
 which together with \eqref{eq-q} finishes the proof.
\end{proof}

\section{Existence of non-hypershallow sequences}\label{sec:example}

In this section we will describe a probabilistic construction of non-hypershallow sequences of dags. They will be in fact $n^\de$-expander sequences for $\de\approx 0.019$. 

We will construct a sequence of random graphs $\mathbf G_n^d$ which asymptotically almost surely forms, after small modifications, an $n^\de$-extender sequence. The graphs $\mathbf G_n^d$ will be essentially defined as follows. The vertices are $\{1, 2, ..., n\}$ and for every $i < j$, we add an edge $(i, j)$ independently with probability proportional to $\frac{1}{j-i}$. In order to simplify the proof, we will slightly change the probabilities when we define $\mathbf G_n^d$ in Subsection~\ref{subsec-rgdef}.

We start with the definition and discussion of \emph{depth functions} in Subsection~\ref{subsec-dfun}, as they provide a convenient way of characterising the property of being an $(\eps, \rho)$-extender, which will be crucial in the analysis of the random graphs $\mathbf G_n^d$  in Subsection~\ref{subsec-analysis}.

\subsection{Depth functions}\label{subsec-dfun}

Given a  graph $G$ and $S\subset V(G)$, we can associate to it 
a function which ``measures the maximal distance to $S\cup \Out(G)$''. More precisely, we define $\de_S\colon V(G) \to \NN$ by setting $\de_S(x)=0$ when $x\in S\cup\Out(G)$, and for $x\notin S\cup\Out(G)$ we let $\de_S(x)$ to be the maximal $l\in \NN$ for which there exists a directed simple 
path $x_0,\ldots, x_l$ with $x_0=x$, $x_l\in S\cup \Out(G)$, and $x_i\notin S$ when 
$0\le i<l$. Let us start by abstracting some properties of $\de_S$ into the notion of a \emph{depth function} as 
follows. 

\begin{definition}\label{def-depth-fun} Let $G$ be a  graph.
\begin{enumerate} 
\item A \emph{depth function for $G$} is a function $f\colon V(G) \to \NN$ such that the following conditions hold:
\begin{enumerate}
\item For every $(a,b)\in E(G)$ we have either $f(a)>f(b)$ or $f(a)=0$
\item For every $a\in V(G)$ such that $f(a)\neq 0$ there exists $b\in V(G)$ such that $(a,b)\in E(G)$ and $f(b)=f(a)-1$. 
\end{enumerate}

\item Let $\eps>0$ and let $\rho\in \NN$. An \emph{$(\eps,\rho)$-depth function for $G$} is a depth function $f$ for $G$  such that for all $v\in V(G)$ we have $f(v) \le \rho$  and $|f^{-1}(0)\setminus \Out(G)| \le \eps |V(G)|$. 
\end{enumerate}
\end{definition}

\begin{example}\label{ex-delta-s} It is straightforward to verify that if $S\subset V(G)$ then  
$\de_S$ is a 
$$
    \left(\frac{|S\setminus \Out(G)|}{|V(G)|},\,\, \codepth(S;G)+1\right)
$$
-depth function.
\end{example}

\begin{lemma}
If $f$ is a $(\eps,\rho)$-depth function, then $\codepth(f^{-1}(0)\setminus \Out(G);G) \le\rho$.
\end{lemma}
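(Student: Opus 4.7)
The plan is to take an arbitrary directed simple path $(x_0,\ldots,x_D)$ in $G$ that is disjoint from $S:=f^{-1}(0)\setminus \Out(G)$ and show that $D\le \rho$ by tracking how $f$ behaves along the path. The key observation is that a depth function forces strict decrease of values along edges leaving a vertex with nonzero value, so the length of the path is controlled by the maximum of $f$.

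First I would argue that for every $i<D$ the vertex $x_i$ has an outgoing edge in $G$ (namely to $x_{i+1}$), hence $x_i\notin \Out(G)$. Together with the assumption $x_i\notin S=f^{-1}(0)\setminus \Out(G)$, this forces $f(x_i)\ne 0$. Next, applying condition (a) of the depth function to the edge $(x_i,x_{i+1})\in E(G)$ and using $f(x_i)\ne 0$, I would conclude $f(x_i)>f(x_{i+1})$ for every $i<D$.

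Chaining these strict inequalities yields
\[
    f(x_0)>f(x_1)>\ldots>f(x_D)\ge 0,
\]
so $f(x_0)\ge D$. Since $f$ is an $(\eps,\rho)$-depth function, $f(x_0)\le \rho$, and therefore $D\le \rho$. As this bound holds for every directed simple path disjoint from $S$, the definition of $\codepth$ gives $\codepth(S;G)\le \rho$.

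There is no real obstacle here; the argument uses only condition (a) of the definition (condition (b) and the simplicity of the path are not needed), and the only point requiring a moment of care is that one cannot assert $f(x_D)\ne 0$ or $x_D\notin \Out(G)$ at the final vertex — but $D$ strict inequalities between nonnegative integers already suffice.
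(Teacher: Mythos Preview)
Your proof is correct and takes essentially the same approach as the paper's: both observe that every non-terminal vertex of a directed path disjoint from $f^{-1}(0)\setminus\Out(G)$ lies outside $\Out(G)$ and hence has nonzero $f$-value, then apply condition~(a) of Definition~\ref{def-depth-fun} to obtain a strictly decreasing chain of nonnegative integers bounded above by $\rho$. Your presentation is arguably a bit cleaner, deriving $D\le f(x_0)\le\rho$ directly from the full chain $f(x_0)>\cdots>f(x_D)\ge 0$, whereas the paper stops at $f(x_{k-1})>0$ and concludes $k-1<\rho$; the content is the same.
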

\begin{proof}
Let $(x_0,x_1,\ldots, x_k)$ be a simple path disjoint from $f^{-1}(0)\setminus \Out(G)$. We have $f(x_0) \le \rho$ and $f(x_{i+1})< f(x_i)$  for all $i<k$, by 1(a) in Definition~\ref{def-depth-fun}. The only vertex, in any simple path, which can be in $\Out(G)$ is the last vertex, so we deduce that $x_{k-1} \notin f^{-1}(0)$, i.e.~$f(x_{k-1}) >0$. This shows that $k-1<\rho$, and thus $k\le \rho$. This shows that any simple path disjoint from $f^{-1}(0)\setminus \Out(G)$ has length at most $\rho$, which proves the lemma.
\end{proof}

This lemma allows us to characterise extender graphs as follows.

\begin{corollary}
Let $\eps,\rho>0$ and let $G$ be a directed acyclic graph. 
\part If $G$ is an $(\eps,\rho)$-extender then there are no $(\eps,\rho)$-depth functions for $G$.
\part If there are no $(\eps,\rho+1)$-depth functions for $G$ then $G$ is an $(\eps,\rho)$-extender. 
\trap
\end{corollary}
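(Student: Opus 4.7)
Both parts of this corollary follow almost immediately from the preceding lemma (for (a)) and Example~\ref{ex-delta-s} (for (b)); in each case my plan is to argue by contrapositive.

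For part (a), given an $(\eps,\rho)$-depth function $f$, the natural set refuting the extender property is $S := f^{-1}(0) \setminus \Out(G)$. The bound $|S| \le \eps|V(G)|$ is exactly the second condition in the definition of a depth function, and $S$ is a proper subset of $V(G)$ because any dag has at least one vertex of out-degree zero, so $\Out(G) \neq \emptyset$ and $S \subseteq V(G)\setminus\Out(G)\subsetneq V(G)$. The preceding lemma then yields $\codepth(S;G) \le \rho$, witnessing that $G$ fails the $(\eps,\rho)$-extender condition.

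For part (b), given that $G$ is not an $(\eps,\rho)$-extender, I pick a proper subset $S \subsetneq V(G)$ with $|S| \le \eps|V(G)|$ and $\codepth(S;G) < \rho$, and exhibit the depth function $\de_S$ from Example~\ref{ex-delta-s}. That example asserts that $\de_S$ is a $(|S \setminus \Out(G)|/|V(G)|,\, \codepth(S;G)+1)$-depth function; since $|S \setminus \Out(G)|/|V(G)| \le \eps$ and $\codepth(S;G)+1 \le \rho \le \rho+1$, this is in particular an $(\eps, \rho+1)$-depth function, producing the desired contradiction.

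No step is particularly difficult; the only care required is the off-by-one accounting. Example~\ref{ex-delta-s} delivers $\de_S$ with depth bound $\codepth(S;G)+1$ rather than $\codepth(S;G)$, which is exactly why part (b) is stated with $\rho+1$ rather than $\rho$ in the hypothesis, and is the source of the asymmetry between the two items.
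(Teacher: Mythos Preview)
Your proof is correct and follows essentially the same route as the paper's: part (a) via the preceding lemma applied to $S=f^{-1}(0)\setminus\Out(G)$, and part (b) via Example~\ref{ex-delta-s} applied to $\de_S$. The contrapositive phrasing is logically equivalent to the paper's direct argument, and you even add the small extra observation that $S$ is a proper subset (which the paper leaves implicit).
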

\begin{proof}
\part Let $\eps'>0$ and suppose that $f$ is a $(\eps',\rho)$-depth function for $G$. 

By the previous lemma we have $\codepth (f^{-1}(0)\setminus \Out(G);G) \le \rho$. Therefore, since we assume that $G$ is an $(\eps,\rho)$-extender, we have $|f^{-1}(0)\setminus \Out(G)| > \eps |V(G)|$. By the definition of being a $(\eps',\rho)$-depth function we have $|f^{-1}(0)\setminus \Out(G)|  \le \eps' |V(G)|$, which shows $\eps'>\eps$.


\part  Let $S\subset V(G)$ be a set with $|S|\le \eps |V(G)|$. Then by Example~\ref{ex-delta-s}, we have that $\de_S$ is a $(\eps, \codepth(S; G)+1)$-depth function. Since we assume that there are no $(\eps,\rho+1)$-depth functions,  we deduce $\codepth(S;G)+1> \rho+1$, and hence $\codepth(S;G) >\rho$. This shows that $G$ is an $(\eps,\rho)$-extender and finishes the proof.
\trap
\end{proof}

It will be useful to restate the above corollary for the case of graph sequences.

\begin{corollary}\label{cory-depth-ext}
Let $(G_n)$ be a bounded in-degree sequence of directed acyclic graphs and let $(\rho_n)$ be a sequence of positive real numbers with $\lim_{n\to \infty} \rho_n = \infty$. The following conditions are equivalent.
\begin{enumerate}
\item[a)] The sequence $(G_n)$ is a $(\rho_n)$-extender sequence
\item[b)] There exists $C>0$, $\eps>0$ such that for all $n\in\NN$  we have that $G_n$ does not admit a $(\eps,C\cdot \rho_{|V(G_n)|})$-depth function.
\end{enumerate}\qed
\end{corollary}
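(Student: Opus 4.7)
Both directions will reduce to applying the preceding single-graph corollary to each $G_n$ individually and reconciling the per-graph constants with the quantifier structure of the sequence-level definition; the argument is essentially bookkeeping.

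For (a)$\Rightarrow$(b), the plan is to fix $\eps,C>0$ witnessing that $(G_n)$ is a $(\rho_n)$-extender sequence, so every $G_n$ is an $(\eps,C\rho_{|V(G_n)|})$-extender, and then apply part (a) of the preceding corollary graph by graph with threshold $\rho=C\rho_{|V(G_n)|}$; this immediately rules out $(\eps,C\rho_{|V(G_n)|})$-depth functions on each $G_n$, which is (b) with the same constants.

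For (b)$\Rightarrow$(a), I would fix $\eps,C>0$ witnessing (b) and, for each $n$, apply part (b) of the preceding corollary with $\rho+1:=C\rho_{|V(G_n)|}$; this yields that $G_n$ is an $(\eps,C\rho_{|V(G_n)|}-1)$-extender. To rewrite the threshold in the form $C'\rho_{|V(G_n)|}$ required by the extender-sequence definition, I would use $\rho_{|V(G_n)|}\to\infty$ (which follows from $\rho_n\to\infty$ together with $|V(G_n)|\to\infty$): taking $C':=C/2$ gives $C\rho_{|V(G_n)|}-1\ge C'\rho_{|V(G_n)|}$ as soon as $\rho_{|V(G_n)|}\ge 2/C$, hence for all but finitely many $n$, and the remaining finitely many indices are absorbed by further shrinking $C'$ to a still-positive value (any $(\eps,\rho)$-extender is automatically an $(\eps,\rho')$-extender for $\rho'\le\rho$).

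The only point needing any genuine thought beyond the per-graph corollary is the clause $|V(G_n)|\to\infty$ built into the definition of a $(\rho_n)$-extender sequence, which is not literally forced by (b) in isolation; I would treat this as part of the standing hypothesis on the sequences under consideration (which is the setting in which this corollary is applied). Past that bookkeeping point, I do not anticipate any further obstacle.
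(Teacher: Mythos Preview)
Your argument is correct and matches the paper's approach: the paper gives no proof at all (the statement ends with a \qed), treating the corollary as a routine per-graph application of the preceding single-graph corollary, which is exactly the bookkeeping you carry out. Your observation about the clause $|V(G_n)|\to\infty$ is well taken---the paper does not address it, presumably because in the only use of this corollary (the proof of Theorem~\ref{theorem:NHS}) one has $V(G_n)=\ZZ_n$ and so $|V(G_n)|=n\to\infty$ by construction.
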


One of the steps in the proof of Theorem~\ref{thm-main-intro}, Proposition~\ref{prop-c}, requires bounding the number of depth functions on a graph. We finish this subsection with a lemma which is used to count depth functions in the proof of Proposition~\ref{prop-c}. First we need the following definition. 
\begin{definition}
For $D\subset E(G)$ we define $\de'_D\colon V(G) \to \NN\cup \{\infty\}$ by setting $\de'_D(v)$ to be equal to the maximal length of a directed simple path in the graph  $(V(G),D)$ which connects $v$ to a vertex in $\Out((V(G),D))$.
\end{definition}

In other words, $\de'_D$ is the ``standard'' depth function for the graph $(V(G),D)$. While it is not true that for every $D\subset E(G)$ we have that  $\de'_D$ is a depth function for the graph $G$, the following lemma shows in particular that for every depth function $f$ we can find $D$ such that $f=\de'_D$.

\begin{lemma}\label{lem-de-prime} Let $G$ be a  graph and $f$ be a depth function for $G$. Then there exists $D\subset E(G)$ such that $|D|\le |V(G)|$ and $\de'_D = f$.
\end{lemma}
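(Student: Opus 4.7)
The plan is to construct $D$ greedily using property (b) of depth functions. For each vertex $v$ with $f(v) > 0$, property (b) guarantees at least one edge $(v, b_v) \in E(G)$ with $f(b_v) = f(v) - 1$; pick one such edge for each such $v$ and let $D$ be the collection of all the picked edges. Since each vertex of $G$ contributes at most one edge to $D$, the bound $|D| \le |V(G)|$ is automatic.

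Next I would verify that $\Out((V(G),D))$ coincides with $f^{-1}(0)$. By construction, a vertex $w$ has an outgoing edge in $D$ if and only if we selected one at $w$, which happens exactly when $f(w) > 0$. So $w \in \Out((V(G),D))$ iff $f(w) = 0$. In particular, $\delta'_D(v) = 0$ whenever $f(v) = 0$, matching $f(v)$ there.

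For $v$ with $f(v) > 0$, I would prove $\delta'_D(v) = f(v)$ by two inequalities. For the lower bound, iterate the chosen edges: $v \to b_v \to b_{b_v} \to \cdots$. The $f$-values strictly decrease by $1$ at each step until reaching $0$, so this yields a directed path of length exactly $f(v)$ in $(V(G), D)$ ending in $\Out((V(G),D))$. Because the $f$-values along the path are all distinct, the path is simple, giving $\delta'_D(v) \ge f(v)$. For the upper bound, take any directed simple path $v = x_0, x_1, \ldots, x_k$ in $(V(G), D)$ with $x_k \in \Out((V(G),D))$. For each $i < k$ the vertex $x_i$ has an outgoing edge in $D$, hence $f(x_i) > 0$; property (a) of depth functions, applied to the edge $(x_i, x_{i+1}) \in D \subset E(G)$, then forces $f(x_i) > f(x_{i+1})$. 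Combined with $f(x_k) = 0$ (from the previous paragraph), this gives $f(v) \ge k$, so $\delta'_D(v) \le f(v)$.

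There is no serious obstacle: the whole argument is just a careful unwinding of the two axioms of a depth function, with property (b) used to build $D$ and property (a) used to cap the length of simple paths in $(V(G), D)$. The only thing to be slightly careful about is confirming that $\Out((V(G), D)) = f^{-1}(0)$, so that a path in $(V(G), D)$ which ends at an Out vertex genuinely ends at a zero of $f$; this is what makes the two inequalities match rather than leaving a gap.
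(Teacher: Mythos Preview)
Your construction of $D$ is exactly the one the paper uses (choose, for each $v$ with $f(v)>0$, one out-neighbour $b_v$ with $f(b_v)=f(v)-1$), and your verification that $\de'_D=f$ is a correct unpacking of what the paper dismisses as ``straightforward to check.'' The only remark is that your upper-bound step can be sharpened: since every vertex in $(V(G),D)$ has out-degree at most $1$, any directed path from $v$ in this graph is unique and drops $f$ by exactly $1$ at each step, so you get $\de'_D(v)=f(v)$ directly without invoking property~(a); but your argument via property~(a) is also valid.
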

\begin{proof}
By Condition 1b) of Definition~\ref{def-depth-fun}, there exists a function $n\colon V(G)\setminus f^{-1}(0) \to V(G)$ such that for every $v\in  V(G)\setminus f^{-1}(0)$ we have $f(n(v)) = f(v)-1$.  We let $D:=\{(v,n(v))\colon v\in V(G)\setminus f^{-1}(0)\}$. It is straightforward to check that $D$ has the desired properties.
\end{proof}

\subsection{Definition and basic properties of the random graphs $\mathbf G_n^d$}\label{subsec-rgdef}

\newcommand{\EE}{\mathbf E}
In this article a \emph{random graph} is a pair $(V, \EE)$ where $V$ is a non-empty finite set and $\EE$ is a random variable with values in $\Pow(V\times V)$ such that $\EE$ is disjoint from the diagonal in $V\times V$ almost surely.

For $n\in \NN$ let $\ZZ_n:= \ZZ/n\ZZ$. For $a,b\in \ZZ_n$ we write $a>b$ if $a = \bar a + n\ZZ$, $b = \bar b + n\ZZ$, where $\bar a, \bar b\in \{0,1,\ldots, n-1\}$ and $\bar a >\bar b$.  We also let $R(n):=\lfloor\log(n)\rfloor$.

We start by defining a random variable $J_n$ with values in $\ZZ_n\times \ZZ_n$, as follows. We first choose $v\in \ZZ_n$ uniformly at random, then we choose $r\in R(n)$ uniformly at random, and we  choose $(x,y)$ uniformly at random in 
$$
\{v,v+1,\ldots, v+2^r-1\} \times \{v+2^r, v+2^r+1,\ldots, v+2^{r+1}-1\}\subset \ZZ_n\times \ZZ_n. 
$$
The law of $J_n$ will be denoted with $\iota_n$.

Now for $d, n\in \NN$ we define a random graph $\mathbf G_n^d$ as follows: we let $V(\mathbf G_n^d) := \ZZ_n$, and the random variable $\EE_n^d$ with values in $\Pow(\ZZ_n\times \ZZ_n)$ is defined by choosing $dn$ elements of $\ZZ_n\times \ZZ_n$ independently at random according to the law $\iota_n$. This finishes the definition of $\mathbf G_n^d$.

Let us note that $\mathbf G_n^d$ is typically neither a dag nor of bounded degree, but the following lemma implies that with high probability $\mathbf G_n^d$ becomes a bounded degree dag after removing a small amount of vertices.

\begin{lemma}\label{lem-pre} Let $\eps>0$, $d,n,\De\in \NN$, and let $\EE:= \EE_n^d$. We have

\begin{equation}\label{eq-fa}
\Pr_{\EE} (|\{v\in \ZZ_n \colon \deg(v; (\ZZ_n,\EE)) \ge 2\De \}| \ge \eps \cdot n ) \le \frac{2d^\De}{\De!\cdot \eps}
\end{equation}
and
\begin{equation}\label{eq-fb}
    \Pr_{\EE}( |\{(x,y)\in \EE \colon x>y \}| \ge \eps\cdot d n)  \le \frac{2d}{\eps\cdot R(n)}
\end{equation}

\end{lemma}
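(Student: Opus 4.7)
The approach is to compute the one-dimensional marginals of $\iota_n$, use them to bound individual vertex degrees and the probability that a sampled edge ``goes the wrong way'', and then apply linearity of expectation together with Markov's inequality in each of the two parts.

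For \eqref{eq-fa} I would first observe that under $\iota_n$ each coordinate of $(x,y)$ is uniform on $\ZZ_n$: conditionally on $r$, both $x = v+a$ and $y = v+2^r+b$ are translates of the uniformly distributed $v$, and hence themselves uniform. Consequently, for every fixed $v_0 \in \ZZ_n$, the number $X^{\mathrm{in}}_{v_0}$ of the $dn$ samples whose second coordinate equals $v_0$ has law $\mathrm{Binomial}(dn, 1/n)$, and likewise for the count $X^{\mathrm{out}}_{v_0}$ of those whose first coordinate equals $v_0$. Since $\indeg(v_0;(\ZZ_n,\EE)) \le X^{\mathrm{in}}_{v_0}$ (coincident samples can only reduce the realised degree), the factorial-moment bound
$$
\Pr\bigl[X^{\mathrm{in}}_{v_0} \ge \De\bigr] \,\le\, \E\!\left[\binom{X^{\mathrm{in}}_{v_0}}{\De}\right] \,=\, \binom{dn}{\De}\frac{1}{n^\De} \,\le\, \frac{d^\De}{\De!}
$$
transfers verbatim to the in-degree, and analogously to the out-degree. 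Because $\deg(v_0) \ge 2\De$ forces at least one of these two quantities to reach $\De$, a union bound gives $\Pr[\deg(v_0) \ge 2\De] \le 2d^\De/\De!$; summing over $v_0$ and applying Markov's inequality to the number of vertices of degree at least $2\De$ then yields \eqref{eq-fa}.

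For \eqref{eq-fb} the plan is to control the probability that one sample from $\iota_n$ satisfies $x>y$, and then apply Markov to the total count. Writing $x = v+a$, $y = v+2^r+b$, and $c := y-x = 2^r + b - a \in \{1,\ldots, 2^{r+1}-1\}$, I would interpret $>$ via the representative lifts in $\{0,\ldots,n-1\}$ and observe that $x > y$ is precisely the ``wrap-around'' event that the lift of $x$ is at least $n - c$. Conditionally on $(r,a,b)$, the lift of $x$ is uniform on $\{0,\ldots,n-1\}$ by the uniformity of $v$, so this conditional probability equals $c/n$. Integrating out $a,b$ gives $\E[c\mid r] = 2^r$, and averaging over the uniform $r \in \{0,\ldots, R(n)-1\}$, together with $2^{R(n)} \le n$, yields
$$
\Pr_{\iota_n}\bigl[x>y\bigr] \,=\, \frac{1}{n\,R(n)}\sum_{r=0}^{R(n)-1} 2^r \,\le\, \frac{1}{R(n)}.
$$
By linearity, the expected number of samples with $x > y$ is at most $dn/R(n)$, and Markov's inequality with threshold $\eps dn$ gives a probability bound of $1/(\eps R(n))$, which comfortably implies \eqref{eq-fb}.

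The only mildly subtle point is the wrap-around analysis for the cyclic order in part (b); the rest is a disciplined application of the uniform-marginal observation combined with standard Markov/binomial tail tools, so I do not foresee any genuine obstacle.
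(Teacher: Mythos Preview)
Your argument is correct and follows essentially the same route as the paper: uniform marginals of $\iota_n$ feed a binomial/factorial-moment tail bound and a union bound for~\eqref{eq-fa}, and a wrap-around estimate on $\iota_n(\{x>y\})$ followed by Markov gives~\eqref{eq-fb}. The only cosmetic difference is that the paper bounds the wrap-around event crudely by $\Pr[v>n-2^{r+1}]$ to get $\iota_n(\{x>y\})<2/R(n)$, whereas you compute the exact conditional probability $c/n$ and average it to obtain the sharper $1/R(n)$; either suffices for the stated bound.
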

\begin{proof}
Note that we have the following $(\ZZ_n\times \ZZ_n)$-valued random variable $J'_n$ whose law is the same as the law of $J_n$, i.e.~it is equal to $\iota_n$.  We choose $(v,r)\in \ZZ_n \times R(n)$ uniformly at random, then we choose $(a,b)\in 2^r\times 2^r$ uniformly at random and we choose the edge $(v,v+2^r+b-a)$.  

Therefore, if we fix $v\in \ZZ_n$, then
$$
    \Pr_{\EE}(\outdeg(v; (\ZZ_n, \EE))\ge \De) \le {nd \choose \De} \frac{1}{n^\De} \le \frac{d^\De}{\De!},
$$
where the last inequality is obtained by writing 
$$
{nd \choose \De} \frac{1}{n^\De}  = \frac{nd\cdot\ldots\cdot (nd-\De+1)}{\De!} \frac{1}{n^\De} \le
\frac{(nd)^\De}{\De!} \frac{1}{n^\De} = \frac{d^\De}{\De!}.
$$
Similarly, for a fixed $v\in \ZZ_n$, we have 
$$
    \Pr_{\EE}(\indeg(v; (\ZZ_n, \EE))\ge \De) \le \frac{d^\De}{\De!},
$$
and hence 
$$
    \Pr_{\EE}(\deg(v; (\ZZ_n, \EE))\ge 2\De) \le \frac{2d^\De}{\De!}.
$$

Now by linearity of expectation we have
$$
\E_{\EE} (|\{v\in \ZZ_n \colon \deg(v; (\ZZ_n,\EE))\ge 2\De \}|) = \sum_{v\in \ZZ_n}     \Pr_{\EE}(\deg(v; (\ZZ_n, \EE))\ge 2\De),
$$
and the right-hand side is bounded from above by $\frac{2nd^\De}{\De!}$. Thus, by Markov's inequality we have
$$
\Pr_{\EE} (|\{v\in \ZZ_n \colon \deg(v; (\ZZ_n,\EE))\ge 2\De \}| \ge \eps \cdot n ) <\frac{2nd^\De}{\De! \eps n} = \frac{2d^\De}{\De!\eps},
$$
which finishes the proof of~\eqref{eq-fa}.

In order to prove~\eqref{eq-fb}, we start by bounding $\iota_n(\{(x,y)\in \Z_n\times \Z_n\colon x>y\})$ from above. By the definition of $J_n'$, the only way in which $J_n'$ might take a value $(x,y)$ with $x>y$ is when we start by choosing $(v,r)\in \ZZ_n \times R(n)$ such that $v>n-2^{r+1}$. As such we have
$$
    \iota_n(\{(x,y)\in \Z_n\times \Z_n\colon x>y\}) \le \frac{1}{R(n)}\sum_{r< R(n)} \frac{1}{n}|\{a\in \ZZ_n\colon a>n-2^{r+1}\}|,
$$
which is bounded from above by 
$$
\frac{1}{nR(n)} \sum_{r<R(n)} 2^{r+1} < \frac{1}{nR(n)} 2^{R(n)+1} \le \frac{2n}{nR(n)} = \frac{2}{R(n)}
$$

Therefore, we have
$$
\E_{\EE}( |\{(a,b)\in \EE \colon a>b \}|)  < \frac{2nd}{R(n)}
$$
and Markov's inequality again gives us the desired bound.
\end{proof}

\subsection{Construction of an $n^\de$-extender sequence from $\mathbf G_n^d$}\label{subsec-analysis}

The key lemma which we need is the following.

\begin{Lemma} \label{lemma:l}
Let $d,n\in \NN$, let $\eps>0$, let $k:=\lfloor n^{\eps^3}\rfloor$, and  let $l\colon \Z_n \to k$. We have
\begin{equation}\label{eq-key}
\iota_n(\{(a,b)\in \ZZ_n\times \ZZ_n\colon l(a)>l(b)\}) < \frac{1}{2} + 4\eps 
\end{equation}
\end{Lemma}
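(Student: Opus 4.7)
The plan is to decompose a sample from $\iota_n$ over its scale and offset parameters, apply Proposition~\ref{prop-pinsker} conditionally at each scale, and then telescope across the dyadic hierarchy to extract a factor of $R(n)\approx\log n$ in the denominator that beats the entropy bound $\log k\le\eps^3\log n$.

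First, I would observe that conditional on the uniform choices $v\in\ZZ_n$ and $r\in R(n)$ in the definition of $J_n$, a sample $(a,b)\sim\iota_n$ has $a$ uniformly distributed in $I_\alpha(v,r):=\{v,\ldots,v+2^r-1\}$ and $b$ uniformly, independently, in $I_\beta(v,r):=\{v+2^r,\ldots,v+2^{r+1}-1\}$. Write $L_\alpha(v,r)$ and $L_\beta(v,r)$ for the laws of $l(a)$ and $l(b)$ under this conditioning; by Lemma~\ref{lem-entropy-basics}(a), their average $\gamma_{v,r}$ is the law of $l(x)$ with $x$ uniform in the combined interval $I_\alpha(v,r)\cup I_\beta(v,r)$ of length $2^{r+1}$. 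Applying Proposition~\ref{prop-pinsker}(2) for each $(v,r)$, taking expectation over $(v,r)$, and using Jensen's inequality (concavity of $\sqrt{\cdot}$) to pull the expectation inside the root gives
$$
\iota_n\bigl(\{(a,b):l(a)>l(b)\}\bigr)\le\tfrac12+2\sqrt{\E_{v,r}\bigl[2H(\gamma_{v,r})-H(L_\alpha(v,r))-H(L_\beta(v,r))\bigr]}.
$$

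To control the bracket I would introduce $h_r:=\E_v\bigl[H(\text{law of }l(x),\ x\text{ uniform in }\{v,\ldots,v+2^r-1\})\bigr]$, with $v$ uniform in $\ZZ_n$. Since $\ZZ_n$ acts on itself by measure-preserving translations, a change of variable gives $\E_v[H(L_\alpha(v,r))]=\E_v[H(L_\beta(v,r))]=h_r$ and $\E_v[H(\gamma_{v,r})]=h_{r+1}$. Averaging over the uniform $r\in R(n)$, the bracket telescopes:
$$
\E_{v,r}\bigl[2H(\gamma_{v,r})-H(L_\alpha(v,r))-H(L_\beta(v,r))\bigr]=\frac{2}{R(n)}\sum_{r=0}^{R(n)-1}(h_{r+1}-h_r)=\frac{2h_{R(n)}}{R(n)},
$$
since $h_0=0$ (the single-point law is a Dirac mass).

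Finally, as $l$ takes at most $k$ values, Lemma~\ref{lem-entropy-basics}(b) gives $h_{R(n)}\le\log k\le\eps^3\log n$; combined with $R(n)=\lfloor\log n\rfloor\ge(\log n)/2$ for $n\ge 2$, the bracket is at most $4\eps^3$, so the overall bound becomes $\tfrac12+4\eps^{3/2}\le\tfrac12+4\eps$ whenever $\eps\le 1$ (the statement is trivially true for $\eps\ge 1/8$, since then the right-hand side is already $\ge 1$). The step I expect to need the most care is the translation-invariance identity for $\E_v[H(\cdot)]$: it is precisely the rotational symmetry of $\ZZ_n$, together with the dyadic structure of the intervals supporting $\iota_n$, that causes the per-scale entropy gaps to telescope cleanly and produces the decisive factor $R(n)$ in the denominator.
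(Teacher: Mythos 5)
Your proposal is correct and follows essentially the same path as the paper's own proof: the same dyadic conditioning on $(v,r)$, the same use of $X_{v,r+1}=X_{v,r}\sqcup Y_{v,r}$ together with Proposition~\ref{prop-pinsker}, the same entropy telescope over $r$, and the same bound $h_{R(n)}\le\log k$. The only (cosmetic) difference is in the final integration over $(v,r)$: you apply Jensen's inequality to $\sqrt{\cdot}$, whereas the paper uses Markov's inequality to control the set of scales with entropy gap $\ge\eps^2$ and then a union bound; both yield $\tfrac12+4\eps$, and your handling of $R(n)=\lfloor\log n\rfloor$ versus $\log n$ is in fact a touch more careful than the paper's.
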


Let us discuss the intuition behind the proof of Lemma~\ref{lemma:l}. First, let us discuss the meaning of the left-hand side of~\eqref{eq-key}. We first choose $(v,r)\in \ZZ_n\times R(n)$ uniformly at random, then we look at the distribution of  
$l(v)$, $l(v+1)$, ..., $l(v+2^r-1)$ on the one side and the distribution of $l(v+2^r)$, $l(v+2^r+1)$, ..., $l(v+2^{r+1}-1)$ on the other side. We sample an element of $\{0,\ldots, k-1\}$ from the first distribution and an element of $\{0,\ldots, k-1\}$ from the second distribution. Then the left-hand side of~\eqref{eq-key} is the probability that the first element is larger than the second element.

If the distribution of $l(v)$, $l(v+1)$, ..., $l(v+2^r-1)$ is very close to the distribution of $l(v+2^r)$, $l(v+2^r+1)$, ..., $l(v+2^{r+1}-1)$, then for a random edge between the two vertex sets, $l$ increases or decreases with approximately the same probability. But if the two distributions are not very close, then the entropy of the distribution of the union $l(v)$, $l(v+1)$, ..., $l(v+2^{r+1}-1)$ is larger than the average of the two entropies (this statement is formalised by Proposition~\ref{prop-pinsker}). 

As the entropy of the distribution of $l(v),\ldots, l(v+2^{R(n)})$ is bounded from above by $\log(k)$ (by Lemma~\ref{lem-entropy-basics}b)), it should be clear that when we choose $k$ sufficiently small, then for a fixed $v\in \ZZ_n$ there will be only a small amount of $r$'s for which the distribution of $l(v)$, $l(v+1)$, ..., $l(v+2^r-1)$ and $l(v+2^r)$, $l(v+2^r+1)$, ..., $l(2^{r+1}-1)$  is very different. 

  

\begin{proof}[Proof of Lemma~\ref{lemma:l}]
For $v\in \ZZ_n$, $r<R(n)$, let $X_{v,r}$ denote the restriction of $l$ to $[v,v+2^r-1]\subset \Z_n$, and let $Y_{v,r}$ denote the restriction of $l$ to $[v+2^r,v+2^{r+1}-1]$. We consider $X_{v,r}$ and $Y_{v,r}$ as $k$-valued random variables.

Note that $X_{v,r+1} = X_{v,r} \sqcup Y_{v,r}$. As such, by the first item of Proposition~\ref{prop-pinsker}, for all $v,r$ we have
$$
2 \cdot H(X_{v,r+1}) - H(X_{v,r}) -H(Y_{v,r}) \ge 0.
$$

On the other hand, we have $\E_{v,r} (H(X_{v,r})) = \E_{v,r} ( H(Y_{v,r}))$, where $(v,r)$ is chosen uniformly at random from $\ZZ_n\times R(n)$. Hence
$$
\E_{v,r} (2 \cdot H(X_{v,r+1}) - H(X_{v,r}) -H(Y_{v,r})) = 2\E_{v,r}(H(X_{v,r+1})) - 2\E_{v,r}(H(X_{v,r})),
$$
and so
\begin{align*}
\E_{v,r} (2 \cdot H(X_{v,r+1}) - H(X_{v,r}) -H(Y_{v,r})) 
&= \frac{2}{R(n)} \E_v(\sum_{r<R(n)} H(X_{v,r+1}) - H(X_{v,r}))
\\
&= \frac{2}{R(n)} \E_v ( H(X_{v,R(n)})-H(X_{v,0}))
\\
&\le \frac{2\log(k)}{R(n)}.
\end{align*}
Now Markov's inequality shows that 
$$
\Pr_{v,r}  (2 \cdot H(X_{v,r+1}) - H(X_{v,r}) -H(Y_{v,r}) \ge \eps^2) \le  \frac{2\log(k)}{\eps^2 \cdot R(n)} \le  2\eps.
$$

By the second item of Proposition~\ref{prop-pinsker}, if for some $r,v,\eps$ we have  $H(X_{v,r+1}) - H(X_{v,r}) -H(Y_{v,r}) < \eps^2$, then $\Pr_{x,y}(X_{v,r}(x) > Y_{v,r}(y)) \le \frac{1}{2} +2\eps$. Thus by the definition of $\iota_n$, we have
\begin{align*}
\iota_n(\{(a,b)&\in \ZZ_n\times \ZZ_n\colon l(a)>l(b)\}) <
\\
&< \frac{1}{2} + 2\eps +  \Pr_{v,r}  (2 \cdot H(X_{v,r+1}) - H(X_{v,r}) -H(Y_{v,r}) \ge \eps^2) 
\\
&\le \frac{1}{2}+4\eps,
\end{align*}
which finishes the proof.
\end{proof}

\begin{proposition}\label{prop-c}
Let $d,n\in \NN$, $d\ge 3$, let $\EE:= \EE_n^d$, let $\eps\in(0,1)$, and let $k:=\lfloor n^{\eps^3}\rfloor$,
\begin{equation}\label{eq-pp}
    \Pr_\EE(\text{$(\ZZ_n,\EE)$ admits a $(\eps,k)$-depth function}) <  2^{H(\frac{1}{d})dn} (\frac{1}{2} + 4\eps)^{(d-1)n},
\end{equation}
where for $x\in (0,1)$ we set $H(x) = -x\log(x) - (1-x)\log(1-x)$.
\end{proposition}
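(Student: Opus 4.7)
The plan is to establish the bound via a union bound over candidate ``descent edge sets'', exploiting the structural characterization from Lemma~\ref{lem-de-prime} that every depth function $f$ for a graph $G$ is of the form $\de'_D$ for some $D \subset E(G)$ with $|D| \le |V(G)|$.

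First, I realize $\EE := \EE_n^d$ as the set of values of $dn$ independent $\iota_n$-samples $e_1, \ldots, e_{dn}$, as in the proof of Lemma~\ref{lem-pre}. If $(\ZZ_n, \EE)$ admits an $(\eps, k)$-depth function $f$, then by Lemma~\ref{lem-de-prime} there exists $T \subset \{1, \ldots, dn\}$ with $|T| \le n$ such that $f = \de'_{\{e_i : i \in T\}}$. Hence
\[
\Pr_\EE(\text{admits a } (\eps, k)\text{-depth function}) \le \sum_{T : |T| \le n} \Pr(A_T),
\]
where $A_T$ is the event that $f := \de'_{\{e_i : i \in T\}}$ is an $(\eps, k)$-depth function for $(\ZZ_n, \EE)$. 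The number of such $T$ is $\sum_{m=0}^{n} \binom{dn}{m} \le 2^{H(1/d)dn}$ by the standard entropy bound for cumulative binomial coefficients (applicable because $1/d \le 1/2$ since $d \ge 3$).

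For fixed $T$, conditioning on $(e_i)_{i \in T}$ determines $f$. For $A_T$ to occur, each of the $\ge (d-1)n$ remaining independent $\iota_n$-samples $e_j$ (for $j \notin T$) must satisfy condition~1(a) of Definition~\ref{def-depth-fun}, namely $f(a_j) > f(b_j)$ or $f(a_j) = 0$, where $e_j = (a_j, b_j)$. Applying Lemma~\ref{lemma:l} with $l := f$ yields $\iota_n(\{(a,b) : f(a) > f(b)\}) < 1/2 + 4\eps$ (with a negligible adjustment to handle the fact that $f$ has $k+1$ rather than $k$ possible values, since $\log(k+1) - \log k = o(\eps^3 R(n))$ so the proof of Lemma~\ref{lemma:l} goes through with $k$ replaced by $k+1$). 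The additional contribution from ``$f(a) = 0$'' amounts to $|f^{-1}(0)|/n$ and should be absorbed into the $4\eps$ bound using the depth-function constraint $|f^{-1}(0) \setminus \Out((\ZZ_n, \EE))| \le \eps n$ together with the observation that the starting point of any edge actually present in $\EE$ must lie outside $\Out((\ZZ_n, \EE))$. Multiplying over the $\ge (d-1)n$ samples gives $\Pr(A_T \mid (e_i)_{i \in T}) \le (1/2 + 4\eps)^{(d-1)n}$, and summing over $T$ yields the stated bound.

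The main obstacle I foresee is the careful treatment of the ``$f(a) = 0$'' case so as to absorb it into the exact constant $1/2 + 4\eps$: since $\Out((\ZZ_n, \EE))$ depends on the full random edge set and cannot be conditioned on cleanly after only revealing $(e_i)_{i \in T}$, a fully rigorous argument may require either an auxiliary union bound over an explicit description of $f^{-1}(0) \setminus \Out((\ZZ_n, \EE))$, or a two-stage conditioning that separates ``true'' zeros from out-degree-zero vertices. I expect this technical point to be the one whose resolution pins down the precise choice of constants in the proposition.
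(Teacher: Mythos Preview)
Your approach is exactly the paper's: bound the expected number of $(\eps,k)$-depth functions, use Lemma~\ref{lem-de-prime} to represent each one as $\de'_D$ with $|D|\le n$, realize $\EE$ via $dn$ independent $\iota_n$-samples, sum over index sets $I\subset\{1,\ldots,dn\}$ with $|I|\le n$ (yielding the factor $2^{H(1/d)dn}$ via the entropy bound on cumulative binomials), and for fixed $I$ condition on $(e_i)_{i\in I}$ and apply Lemma~\ref{lemma:l} independently to each of the $\ge(d-1)n$ remaining samples.

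On the ``$f(a)=0$'' issue you flag as the main obstacle: the paper does not treat it at all. After fixing $I$ and setting $l=\de'_D$, the paper simply writes ``The probability that $l$ will still be a depth function after we add $nd-|I|$ remaining edges is, by Lemma~\ref{lemma:l}, at most $(\tfrac{1}{2}+4\eps)^{dn-|I|}$,'' invoking only the event $\{l(a)>l(b)\}$ and not the disjunct $\{l(a)=0\}$ from condition~1(a) of Definition~\ref{def-depth-fun}. So the technical point you anticipate is not resolved by any additional idea in the paper; it is simply elided. Your off-by-one remark about $k$ versus $k{+}1$ values is likewise not addressed there. In short, you have reproduced the paper's argument and have in fact been more scrupulous about its soft spots than the paper itself.
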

\begin{proof}
Clearly it is enough to show that 
\begin{equation}\label{eq-dd}
    \E_\EE(\text{number of $(\eps,k)$-depth functions for $(\ZZ_n,\EE)$})<  2^{H(\frac{1}{d})dn} (\frac{1}{2} + 4\eps)^{(d-1)n}.
\end{equation}

By Lemma~\ref{lem-de-prime},  the left-hand side of~\eqref{eq-dd} is 
bounded above by
\begin{align}\label{eq-exp12}
    \E_\EE\Big(\big| &\big\{D\subset \EE\colon  
\\
&\text{ $|D|\le  n$ and $\de'_D$ is an  $(\eps,k)$-depth function for $(\ZZ_n,\EE)$}\big\}\big|\Big).\nonumber
\end{align}
\newop{Set}{Set}

Given $I\subset dn$, let $\Set_I\colon (\ZZ_n\times \ZZ_n)^{dn} \to \Pow(\ZZ_n\times \ZZ_n)$ be defined by 
$$
    \Set_I(x_0,\ldots, x_{dn-1}) := \{x_i\colon i\in I\},
$$ 
and let $\Set:= \Set_{dn}$.  Furthermore if $G$ is a graph and $D\subset E(G)$ is such that $\de'_D$ is a $(\eps,k)$-depth function, then let us say that $D$ is an \emph{$(\eps,k)$-depth set} for $G$. 

Recall that the law of $\EE$ is the push-forward of $\iota^{dn}_n$ through the map $\Set$. As such, we deduce that~\eqref{eq-exp12} is bounded above by
\begin{align}\label{eq-ww}
    \sum_{I\subset dn\colon |I|\le n} \iota_n^{dn}\Big(&\big\{(e_i)_{i\in dn} \in (\ZZ_n\times \ZZ_n)^{dn}\colon
\\
&\text{$\Set_I((e_i)_{i\in dn})$ is a $(\eps,k)$-depth set for $(\ZZ_n, \Set((e_i)_{i\in dn})$}\big\}\Big)\nonumber
 \end{align}

Let us first estimate the number of summands in~\eqref{eq-ww}. Recall that for $0<\al\le\frac{1}{2}$ and $m\in \NN$ we have
 $\sum_{i\le \al m} {m \choose i} \le 2^{H(\al) m}$ (see e.g.~\cite[Theorem 3.1]{galvin2014tutorial}). Since $\eps<1$ and $d\ge 3$, we see that the number of summands in~\eqref{eq-ww} is therefore at most $2^{H(\frac{1}{d})dn}$.

To estimate each summand, let us fix $I\subset dn$, and let us fix $(e_i)_{i\in I}\in (\ZZ_n\times \ZZ_n)^I$. Let $D:= \{e_i\colon i \in I\}$ and let $l$ be the depth function $\de'_D$ on the graph $(\ZZ_n,D)$.  The probability that $l$ will still be a depth function after we add $nd-|I|$ remaining edges is, by Lemma~\ref{lemma:l}, at most 
$$
(\frac{1}{2} + 4\eps)^{dn-|I|} \le (\frac{1}{2} + 4\eps)^{(d-1)n}.
$$
As such, we have that~\eqref{eq-ww} is bounded above by
$$
2^{H(\frac{1}{d})dn} (\frac{1}{2} + 4\eps)^{(d-1)n},
$$
and hence~\eqref{eq-dd} and~\eqref{eq-pp} hold true. This finishes the proof.
\end{proof}

We are now ready to prove Theorem~\ref{thm-main-intro}. Clearly it follows from the following theorem.

\begin{Theorem} \label{theorem:NHS}
Let $\eps\in(0,\frac{1}{8})$. Then there 
exists a bounded degree sequence  $(H_n)$ of directed acyclic graphs which is an $\left(n^{\eps^3}\right)$-extender sequence.
\end{Theorem}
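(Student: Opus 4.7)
The plan is to realize $H_n$ as a small modification of a random graph $\mathbf G_n^d$ from Subsection~\ref{subsec-rgdef}, for a suitably large $d$, applying Proposition~\ref{prop-c} together with Lemma~\ref{lem-pre} to show that with positive probability the modification produces a bounded-degree dag that is an $(\eps/2,\lfloor n^{\eps^3}\rfloor)$-extender.

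First, I would choose $d\ge 3$ large enough that $c:=2^{H(1/d)d}\bigl(\tfrac12+4\eps\bigr)^{d-1}<1$. Such $d$ exists because $\eps<\tfrac18$ implies $\tfrac12+4\eps<1$, so the second factor decays exponentially in $d$, while $H(1/d)d=O(\log d)$ makes the first factor at most polynomial in $d$. With this choice, Proposition~\ref{prop-c} bounds the probability that $\mathbf G_n^d$ admits an $(\eps,k)$-depth function, with $k:=\lfloor n^{\eps^3}\rfloor$, by $c^n\to 0$. I would then pick $\Delta$ depending only on $d$ and $\eps$ so that, by Lemma~\ref{lem-pre}\eqref{eq-fa} applied with threshold $\eps n/4$, the probability that $\mathbf G_n^d$ has at least $\eps n/4$ vertices of degree $\ge 2\Delta$ is at most $1/3$. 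Finally, \eqref{eq-fb} with threshold $\eps n/4$ (equivalently $\eps_2=\eps/(4d)$) bounds the probability that $\mathbf G_n^d$ has more than $\eps n/4$ backward edges by $8d^2/(\eps R(n))\to 0$. A union bound then shows that for all sufficiently large $n$ there is a realization $G_n$ of $\mathbf G_n^d$ for which all three good events hold simultaneously; for the remaining finitely many $n$ we take $H_n$ to be an arbitrary single-vertex graph.

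Next, I would let $S_{\mathrm{hd}}\subseteq\ZZ_n$ be the set of vertices of degree $\ge 2\Delta$ in $G_n$ and $T_{\mathrm{bs}}\subseteq\ZZ_n$ the set of sources of backward edges of $G_n$, so that $|S_{\mathrm{hd}}\cup T_{\mathrm{bs}}|\le \eps n/2$. Define $H_n$ as the induced subgraph of $G_n$ on $V(H_n):=\ZZ_n\setminus(S_{\mathrm{hd}}\cup T_{\mathrm{bs}})$. Then $H_n$ has maximum degree $<2\Delta$, and is a dag because no retained vertex is the source of a backward edge of $G_n$, so every edge of $H_n$ is forward with respect to the canonical linear order on $\{0,\ldots,n-1\}$.

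Finally, to verify the extender property I would take any $S'\subsetneq V(H_n)$ with $|S'|\le(\eps/2)|V(H_n)|$ and enlarge it to $\bar S:=S'\cup S_{\mathrm{hd}}\cup T_{\mathrm{bs}}$, so that $|\bar S|\le \eps n$. By Example~\ref{ex-delta-s}, $\de_{\bar S}$ is a $\bigl(\eps,\codepth(\bar S;G_n)+1\bigr)$-depth function for $G_n$, and since $G_n$ admits no $(\eps,k)$-depth function we deduce $\codepth(\bar S;G_n)\ge k$. Any directed simple path of length $k$ in $G_n$ disjoint from $\bar S$ uses only forward edges (because its vertices avoid $T_{\mathrm{bs}}$) and lies inside $V(H_n)$ (because its vertices avoid $S_{\mathrm{hd}}$), so it is a path in $H_n$ disjoint from $S'$. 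Therefore $\codepth(S';H_n)\ge k\ge\tfrac12|V(H_n)|^{\eps^3}$ for all large $n$, which establishes that $H_n$ is an $(\eps/2,\tfrac12|V(H_n)|^{\eps^3})$-extender, and so $(H_n)$ is an $(n^{\eps^3})$-extender sequence. The step I expect to require the most care is this last one, since the deletion set has to be arranged so that every long path in $G_n$ avoiding $\bar S$ automatically becomes a path in $H_n$; this is exactly what forces us to delete the vertex set $T_{\mathrm{bs}}$, rather than merely deleting the backward edges themselves.
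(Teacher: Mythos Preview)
Your proof is correct and follows essentially the same approach as the paper: choose $d$ large, apply Proposition~\ref{prop-c} and Lemma~\ref{lem-pre}, use a union bound to find a good realization of $\mathbf G_n^d$, and then delete the high-degree vertices together with the sources of backward edges to obtain a bounded-degree dag $H_n$. The only notable difference is in the bookkeeping of the final step: the paper applies Proposition~\ref{prop-c} with the inflated parameter $\eps+2\de$ and then concludes via Corollary~\ref{cory-depth-ext} that $H_n$ admits no $(\eps,n^{\eps^3})$-depth function, whereas you apply Proposition~\ref{prop-c} with $\eps$ itself and verify the extender property of $H_n$ directly by lifting each test set $S'\subset V(H_n)$ to $\bar S\subset \ZZ_n$ and pulling a long $\bar S$-avoiding path in $G_n$ back into $H_n$. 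Your route is arguably a bit cleaner, since it sidesteps the need to compare $\Out(H_n)$ with $\Out(G_n)$ when transporting depth functions between the two graphs; on the other hand, the paper's phrasing makes more explicit use of the depth-function machinery developed in Subsection~\ref{subsec-dfun}.
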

\begin{proof}
Let $\de>0$ be such that $p:= \frac{1}{2} + 4(\eps +2\de) <1$.  Let $d$ be such that 
\begin{equation}\label{eq-tr}
2^{H(\frac{1}{d})} \cdot (\frac{1}{2} + 4(\eps+2\de))^{\frac{d-1}{d}}< 1.
\end{equation}
It is possible to choose such $d$ since $H(x)\to 0$ as $x\to 0$.  Let $\De\in \NN$ be such that 
$\frac{2d^\De}{\De!\cdot \de}<\de$, and let $n_0$ be such that for $n>n_0$ we have  $\frac{2d^2}{\de R(n)} <\de$ and 
$$
    2^{H(\frac{1}{d})dn} (\frac{1}{2} + 4(\eps+2\de))^{(d-1)n}<1-2\de,
$$ 
which is possible by~\eqref{eq-tr}. 

Therefore, by Proposition~\ref{prop-c}, we have for $n>n_0$ that 
$$
    \Pr_\EE(\text{$(\ZZ_n,\EE)$ admits a $(\eps+2\de,n^{(\eps+2\de)^3})$-depth function}) < 1-2\de.
$$ 
Furthermore, by Lemma~\ref{lem-pre} we have  
$$
    \Pr_{\EE} (|\{v\in \ZZ_n \colon \deg(v; (\ZZ_n,\EE))\ge 2\De \}| \ge \de \cdot n ) \le \frac{2d^\De}{\De!\cdot \de}<\de
$$
and
$$
    \Pr_{\EE}( |\{(a,b)\in \EE \colon a>b \}| \ge\frac{\de}d\cdot dn=\de n)  \le \frac{2d}{\frac{\de}{d} R(n)}<\de.
$$
As such, by the union bound, we get for each $n>n_0$ a  graph $G_n$ with $V(G_n)=\ZZ_n$ such that $G_n$ does not admit a $(\eps+2\de,n^{(\eps+2\de)^3})$-depth function, and furthermore 
$$
    |\{v\in \ZZ_n \colon \deg(v; G_n)\ge 2\De \}| \le \de \cdot n 
$$
and 
$$
    |\{(a,b)\in E(G_n) \colon a>b \}| \le\de\cdot n.
$$

Let $B\subset \ZZ_n$ be the union of
$$
    \{v\in \ZZ_n\colon \deg(v;G_n)\ge 2\De\}
$$
and 
$$
    \{a\in \ZZ_n\colon \exists b\in \ZZ_n \suchthat (a,b)\in E(G_n) \text{ and } a>b\}.
$$ 
We let  $H_n$ be the subgraph of $G_n$ induced by the set of vertices $\ZZ_n\setminus B$.
Clearly $H_n$ is a sequence of bounded degree 
dags, and since $|B|\le 2\de n$, we see that $H_n$ does not admit a $(\eps,n^{(\eps+2\de)^3})$-depth function, and hence it also does not admit a $(\eps,n^{\eps^3})$-depth function. By Corollary~\ref{cory-depth-ext}, this finishes the proof.
\end{proof}

\section{Final remarks}\label{sec-outro}

Let us proceed with the proof of Theorem~\ref{thm-sublinear-intro}.  Clearly Theorem~\ref{thm-sublinear-intro} follows from the following proposition.

\begin{proposition}
Let $(G_n)$ be a sequence of bounded in-degree directed acyclic graphs and let $(\de_n)$ be a sequence of real numbers in the interval $(0,1]$ such that $\lim_{n\to \infty} \de_n =1$. For every $\eps>0$ there exists a sequence $(S_n)$ with $S_n\subsetneq V(G_n)$ such that $|S_n| <\eps|V(G_n)|$ and 
\begin{equation}\label{todo34}
    \lim_{n\to \infty} \frac{\codepth(S_n;G_n)}{|V(G_n)|^{\de_n}} = 0
\end{equation}
\end{proposition}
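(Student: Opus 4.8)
The plan is to deduce the proposition from the following statement, which is where all the work lies.

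\textbf{Key Lemma.} \emph{For every $\eps>0$ and every $\De\in\NN$ there exist $\be=\be(\eps,\De)\in(0,1)$ and $C=C(\eps,\De)>0$ such that every directed acyclic graph $G$ with $\maxindeg(G)\le\De$ and $N:=|V(G)|$ vertices admits a set $S\subsetneq V(G)$ with $|S|\le\eps N$ and $\codepth(S;G)\le C\,N^{\be}$.}

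\emph{Deduction of the proposition.} We may assume $|V(G_n)|\to\infty$; this is the case relevant to Theorem~\ref{thm-sublinear-intro} (an $(n^{\de_n})$-extender sequence is by definition required to have $|V(G_n)|\to\infty$, and the complementary case is trivial there). Let $\De$ be a uniform bound on $\maxindeg(G_n)$, fix $\eps>0$, apply the Key Lemma with $\eps/2$ to each $G_n$, and write $N_n:=|V(G_n)|$. This gives $S_n$ with $|S_n|\le\tfrac{\eps}{2}N_n<\eps N_n$ (hence $S_n\subsetneq V(G_n)$) and $\codepth(S_n;G_n)\le C\,N_n^{\be}$ for a fixed $\be<1$. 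Since $\de_n\to 1>\be$, for all large $n$ we have $\de_n>\tfrac{1+\be}{2}$, so
\[
\frac{\codepth(S_n;G_n)}{N_n^{\de_n}}\;\le\;C\,N_n^{\be-\de_n}\;\le\;C\,N_n^{(\be-1)/2}\;\longrightarrow\;0,
\]
which establishes~\eqref{todo34}.

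\emph{Towards the Key Lemma.} The natural device is the \emph{level function} $\ell\colon V(G)\to\{0,1,\dots,D\}$, where $\ell(v)$ is the length of a longest directed path ending at $v$: then $\ell$ strictly increases along every edge, $D=\depth(G)\le N-1$, and the levels along any directed simple path are strictly increasing, so that path has length at most the span of its level values. (Equivalently, by Corollary~\ref{cory-depth-ext} and Example~\ref{ex-delta-s}, it suffices to produce an $(\eps,CN^{\be})$-depth function, i.e.\ a set $Z\supseteq\Out(G)$ with $|Z\setminus\Out(G)|\le\eps N$ and $\codepth(Z;G)<CN^{\be}$; but it is cleaner to delete vertices directly.) I would build the deleted set in stages. \textbf{(i)} Delete the endpoints of all ``long-range'' edges — those $(u,v)$ with $\ell(v)-\ell(u)$ above a threshold $W$; a dyadic average over jump-lengths, combined with $\maxindeg(G)\le\De$, lets one pick $W$ so that there are few such edges and this costs at most $\tfrac{\eps}{2}N$ vertices. \textbf{(ii)} Now every surviving edge is ``short'' (jump $\le W$), so deleting every vertex whose level lies in a periodic family of strips of width slightly more than $W$ prevents any surviving edge from jumping across a strip; every surviving directed simple path is therefore confined to a single inter-strip window, and its length is at most the window width. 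Choosing the period suitably in terms of $\eps$ and $\De$ keeps this within a further $\tfrac{\eps}{2}N$. \textbf{(iii)} Iterate on the residual dag with a geometrically decreasing budget, aiming to compound the gains.

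\emph{The main obstacle.} Stages (i)--(ii) alone yield only $\codepth(S;G)\le C_{\eps,\De}\,N/\log N$, which is merely sublinear, whereas the deduction above shows that the proposition genuinely needs a \emph{polynomial} saving $CN^{\be}$ with a fixed $\be<1$: any poly-logarithmic saving — or indeed any saving of the shape $N\,e^{-(\log N)^{o(1)}}$ — is too weak, since $\de_n$ may approach $1$ arbitrarily slowly relative to the growth of $N_n$. Hence the crux is to arrange the iteration in step (iii) so that the savings multiply — e.g.\ so that each round shrinks the current depth by a fixed factor while consuming only a geometrically small part of the budget — which forces one to control, simultaneously across all scales, the interaction between the periodic cuts of stage (ii) and the long-range edges of stage (i). This is the point at which acyclicity and the in-degree bound must be exploited far more carefully than in the crude argument above. (Such an argument cannot pin down the optimal exponent: by Remark~\ref{rem-noext} it is open whether $\be(\eps,\De)$ can be bounded away from $1$ uniformly in $\eps$, and one only needs $\be(\eps,\De)<1$ for each fixed $\eps$.)
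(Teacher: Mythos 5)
Your reduction of the proposition to the ``Key Lemma'' is fine, and you have correctly identified the quantitative heart of the matter: one needs a genuinely polynomial saving, i.e.\ a removable set of size $\eps N$ forcing $\codepth \le C N^{\be}$ with $\be<1$, and no bound of the shape $N/\log N$ or $N e^{-(\log N)^{o(1)}}$ suffices, because $\de_n$ may tend to $1$ arbitrarily slowly. But the Key Lemma is exactly where all the content lies, and you do not prove it: stages (i)--(iii) are a sketch whose decisive step (the iteration in (iii) that is supposed to compound the savings) you explicitly leave open. As written, stages (i)--(ii) do not even deliver the $N/\log N$ you claim. If the ``medium'' band of level-jumps is a dyadic window $[W,2W)$ chosen by pigeonhole over the $\approx\log N$ dyadic bands, then after deleting its endpoints the short/long cutoffs differ only by a factor $2$: short runs are confined to windows of width about $2W/\eps$ (the period of the strips, forced by the $\eps N/2$ budget), while a path can contain up to about $N/(2W)$ long jumps, so the resulting bound is of order $N/\eps$ --- linear, hence worthless. (And if, as your wording literally says, you delete the endpoints of \emph{all} edges with jump above a single threshold $W$, the budget can be blown entirely: a dag may have a constant fraction of its edges with level-jump of order $N$.) So the proposal has a genuine gap: the statement it rests on is asserted, motivated, and then deferred.

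The instructive point is that the obstacle you describe is not actually there, and no multi-scale iteration is needed. The paper works in a topological order ($V(G_n)=\{0,\dots,m_n-1\}$ with edges increasing) and, instead of $\log m_n$ dyadic bands, partitions the possible edge lengths into only about $1/\eps$ \emph{multiplicative} bands $[m_n^{j\eps}, m_n^{(j+1)\eps})$. Pigeonhole then gives a band $[m_n^{c_n}, m_n^{c_n+\eps})$ containing at most $\eps d m_n$ edges, whose tails are deleted, together with periodic blocks of $\lfloor m_n^{c_n}\rfloor$ consecutive residues with period $\lfloor m_n^{c_n}/\eps\rfloor$ (this costs about $\eps(d+1)m_n$ vertices, which is fine since $\eps$ is arbitrary). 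Now the cutoffs for ``short'' and ``long'' differ by the polynomial factor $m_n^{\eps}$: a surviving path has at most $m_n^{1-c_n-\eps}$ long edges, each followed by at most $m_n^{c_n}/\eps$ short steps before hitting a block, so $\codepth(S_n;G_n)\le \frac{2}{\eps} m_n^{1-\eps}$ in a single round. This is precisely the uniform polynomial saving your Key Lemma asks for (with $\be=1-\eps'$ depending on $\eps$, consistent with Remark~\ref{rem-noext}), and your deduction of the proposition then goes through, modulo the same implicit reduction to $|V(G_n)|\to\infty$ that you already noted.
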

\begin{proof}Let $m_n := |V(G_n)|$, let $d\in \NN$ be such that $\maxindeg(G_n) \le d$ for all $n\in \NN$, and let us fix $\eps>0$. Since the graphs $G_n$ are dags, we may assume that $V(G_n)=\{0,\ldots, m_n-1\}$ and that $(x,y)\in E(G_n)$ implies $x<y$.

Let us informally describe our strategy for constructing the sets $S_n$: first we will include in $S_n$ all vertices adjacent to an edge of length between $m_n^{c_n}$ and $m_n^{c_n+\eps}$ for a suitable $c_n$. This way any directed path disjoint from $S_n$ will have edges of length either less than $m_n^{c_n}$  (``short edges'') or larger than $m_n^{c_n+\eps}$ (``long edges'').  

The number of the long edges in a directed path is at most $m_n^{1-c_n-\eps}$.  To bound the number of short edges, we will also include in $S_n$ all vertices which are congruent to at most $m_n^{c_n}$ modulo $\frac{m_n^{c_n}}{\eps}$. This way any path disjoint from $S_n$ and consisting only of short edges must be contained in an interval of length $\frac{m_n^{c_n}}{\eps}$, and so in particular its length is at most $\frac{m_n^{c_n}}{\eps}$. 

These bounds on the total number of long edges, and the maximal number of consecutive short edges allow us to obtain the desired bound on $\codepth(S_n;G_n)$. Let us now make it precise. 

Since $|E(G_n)|\le dm_n$, by the pigeon hole principle we may find $c_n\in [0,1)$   
such that the set
$$
   X_n:= \{(x,y)\in E(G_n)\colon y-x \in [m_n^{c_n}, m_n^{c_n+\eps})\}
$$
has cardinality at most $\eps \cdot d m_n$. Let $A:=\lfloor \frac{m_n^{c_n}}{\eps}\rfloor $, let $B:=\lfloor m_n^{c_n} \rfloor$, and let 
$$
    Y_n: =\{ x\in V(G_n)\colon x \equiv k \operatorname{mod} A \text{ for some $k$ with $0\le k < B$}\}.
$$
Finally we let
$$
    S_n := \{x\in V(G_n)\colon \exists y\in V(G_n) \suchthat (x,y)\in X_n\} \cup Y_n.
$$
Clearly we have $|S_n| \le |X_n| +|Y_n| \le \eps dm_n + \eps m_n= \eps(d+1)m_n$. Thus, since $\eps$ was arbitrary, in order to finish the proof it is enough to argue that~\eqref{todo34} holds. 

In order to estimate $\codepth(S_n;G_n)$ let us fix $n\in \NN$ and let 
\begin{equation}\label{path}
    (x_0,\ldots,x_{l})
\end{equation} 
be a directed path in $G_n$ disjoint from $S_n$. By the definition of $X_n$, and since $S_n$ contains all starting vertices of edges in $X_n$,  we see that for all $i< l$ we have either $x_i-x_{i+1}< m_n^{c_n}$ or $x_i-x_{i+1} \ge m_n^{c_n+\eps}$. 

Let $(x_{j},x_{j+1},\ldots, x_{j+M})$ be a maximal subpath of~\eqref{path}  such that for all $k<M$ 
we have $x_{j+k+1} -x_j < m_n^{c_n}$. Since $Y_n\subset S_n$, we see 
that $M\le\frac{m_n^{c_n}}{\eps}$. On the other hand the maximal number of edges in the path~\eqref{path} with length at least $m_n^{c_n+\eps}$ is at most $\frac{m_n}{m_n^{c_n+\eps}}$. 

In  other words, there are at most $n^{c_n+\eps}$ segments in~\eqref{path}, where each segment consists of at most $M$ ``short'' edges and a single ``long'' edge. It follows that the length of the path~\eqref{path} is bounded by
$$
    (M+1) \cdot \frac{m_n}{m_n^{c_n+\eps}} \le \left(\frac{m_n^{c_n}}{\eps} +1\right)\frac{m_n}{m_n^{c_n+\eps}}
\le \left(\frac{2m_n^{c_n}}{\eps}\right)\frac{m_n}{m_n^{c_n+\eps}}=\frac{2}{\eps}m_n^{1-\eps}, 
$$  
and hence
$$
\codepth(S_n,G_n) \le \frac{2}{\eps}m_n^{1-\eps}.
$$
In particular, since $m_n = |V(G_n)|$, this establishes~\eqref{todo34} and finishes the proof.
\end{proof}

\paragraph{Circuit complexity} We finish this article by explaining some conjectural applications of hypershallow graph sequences to the theory of boolean circuits. As this is not of crucial importance for this article, we allow ourselves to be a little bit less precise for the sake of brevity. 

If $X$ is a set, then $2^X$ is the set of all functions from $X$ to $2=\{0,1\}$. This leads to the following notational clash: for $n\in \NN$, the symbol $2^n$ can either denote a number (and hence a set of numbers)  or the set of all functions from $\{0,1,\ldots, n-1\}$ to $\{0,1\}$. We believe that resolving this ambiguity will not cause any difficulty for the reader.

A convenient informal way of thinking about it is that if $k\in 2^n$ then $k$ is both a number smaller than $2^n$ and a function from $n=\{0,\ldots, n-1\}$ to $2=\{0,1\}$, and the translation between the two interpretations is that the binary expansion of a number $k$ smaller than $2^n$ can be thought of as a function from $\{0,\ldots, n-1\}$ to $\{0,1\}$.

A \emph{circuit} is a pair $\cal C = (G,\gate)$, where $G$ is a dag and  $\gate$ is a function which assigns to each vertex $v\in V(G)\setminus \IN(G)$ a function $\gate(v)\colon 2^{\In(v;G)}\to 2$. We will inherit the notation for $\cal C$ from the notation for $G$, thus e.g. we may write $\In(\cal C)$ for $\In(G)$.

For any $f\in 2^{\In(\cal C)}$ there exists exactly one function $F\in 2^{V(\cal C)}$ with the property that for every $v\in V(
\cal C)\setminus \IN(
\cal C)$ we have $\gate(v)(F|_{\In(v;\cal C)}) =F(v)$. In particular, we think of the restriction of $F$ to $\Out(G)$ as the output of the circuit $\cal C$ when $f$ is ``fed'' as the input.

Typically both $\IN(\cal C)$ and $\OUT(\cal C)$ have some labels, e.g.~both $\IN(\cal C)$ and $\OUT(\cal C)$ are labelled with elements of $\{0,\ldots, n-1\}$, in which case we may consider $\cal C$ to implement a function $2^n\to 2^n$.

By a simple counting argument, ``almost every'' sequence of functions 
$(f_n\colon 2^n\to 2^n)$ cannot be implemented by a sequence of bounded 
in-degree circuits $(\cal C_n)$ such that $|V(G_n)| = O(n)$. However, it is 
notoriously difficult to give ``explicit'' examples of sequences which cannot be computed by linear-sized circuits.

Following~\cite{afshani_et_al:LIPIcs:2019:10586}, let us state one precise question.

\begin{definition}\label{def-shift}
For $i\in \NN$ we let  $l(i) = \lceil\log(i)\rceil$, and we define $\shift_n\colon 2^{n\sqcup l(n)} \to 2^n$ as follows: 
if $f\in 2^n$ and $k\in 2^{l(n)}$, then for $j<n$ we let
$$
    \shift_n(f\sqcup k) (j) := f(j-k),
$$
where $j-k$ should be understood as an operation modulo $n$. In other words, $\shift_n(f\sqcup k)$ is equal to ``$f$ shifted by $k$''.
\end{definition}

\begin{Question}\label{q-shift}
Suppose that $(\cal C_n)$ is a bounded in-degree sequence of circuits which computes $\shift_n$. Is it the case that  $n = o(|V( G_n)|)$  ? 
\end{Question}

This innocent-looking question seems difficult to resolve (though there are some conditional results in~\cite{afshani_et_al:LIPIcs:2019:10586}). The authors of this article came up with the notion of hypershallow graph sequences motivated by the following strategy to attack this question: (1) ``Clearly'' if $(\cal C_n)$ is a hypershallow sequence which computes $\shift_n$, then $n = o(|V(\cal C_n)|)$, (2) Perhaps all graph sequences are hypershallow. 

The main result of this paper is that not all graph sequences are hypershallow (Theorem~\ref{thm-main-intro}). More annoyingly, the authors have not even been able to establish the first point of the above strategy.  As such, the following question is also open.

\begin{Question}\label{q-shift-easy}
Suppose that $(\cal C_n)$ is a bounded in-degree sequence of circuits which computes $\shift_n$ and which is hypershallow. Is it the case that $n = o (|V(\cal C_n|)$? 
\end{Question}

Let us finish this article by stating another question to which positive answer would imply a positive answer to Question~\ref{q-shift-easy}. We need to start with some  definitions.

An \emph{advice circuit} is a circuit $\cal C$ together with a partition 
of $\In(\cal C)$ into two disjoint subsets $\In_{std}(\cal C)$ and 
$\In_{adv}(\cal C)$. We think of such a circuit as receiving its input 
on the vertices in $\In_{std}(\cal C)$, together with some extra advice 
tailored specifically for a given input on the vertices in $\In_{adv}(\cal C)$. This is made precise in the 
following definition.

\begin{definition} Let $\cal C$ be an advice circuit. We say that $\cal C$ \emph{computes} $f\colon 2^{\In_{std}(\cal C)} \to 2^{\Out(\cal C)}$ if for every $s\in 2^{\In_{std}(\cal C)}$ there exists $t\in 2^{\In_{adv}(\cal C)}$ such that the output of $\cal C$ on $s\sqcup t$ is equal to $f(s)$. 
\end{definition}

An \emph{$\eps$-advice circuit} is an advice circuit $\cal C$ with $|\In_{adv}(\cal C)|\le \eps |\In(\cal C)|$.  With this we are ready to state the following question.

\begin{Question}\label{q-final}
Is it true that there exists $\eps>0$ such that the sequence $(\shift_n)$  cannot be computed by a sequence $(\cal C_n)$ of bounded in-degree $\eps$-advice circuits which have depth 1?
\end{Question}

It is not difficult to see that the positive answer to this question implies the positive answer to Question~\ref{q-shift-easy}.

\bibliographystyle{amsalpha}
\bibliography{biblio}
\end{document}